\newtheorem{thm}{Theorem}
\newtheorem{lem}{Lemma}
\theoremstyle{definition}
\def\-{\mbox{--}}
\newtheorem{pro}{Proposition}
\newtheorem{obs}{Observation}
\newtheorem{definition}{Definition}
\begin{document}
\title{\bf Graphs with $4$-rainbow index $3$ and $n-1$\Large\bf \footnote{Supported
by NSFC Nos. 11371205 and 11071130, and the ``973" program.}}
\author{\small Xueliang~Li$^1$, Ingo Schiermeyer$^2$, Kang Yang$^1$, Yan~Zhao$^1$\\
\small $^1$Center for Combinatorics and LPMC-TJKLC\\
\small Nankai University, Tianjin 300071, China\\
\small  lxl@nankai.edu.cn;
 yangkang@mail.nankai.edu.cn; zhaoyan2010@mail.nankai.edu.cn\\
\small $^2$Institut f\"{u}t Diskrete Mathematik und Algebra\\
\small Technische Universit\"{a}t Bergakademie Freiberg\\
\small 09596 Freiberg, Germany. Ingo.Schiermeyer@tu-freiberg.de }
\date{}
\maketitle
\begin{abstract}

Let $G$ be a nontrivial connected graph with an edge-coloring
$c:E(G)\rightarrow \{1,2,\ldots,q\},$ $q\in \mathbb{N}$, where
adjacent edges may be colored the same. A tree $T$ in $G$ is called
a $rainbow~tree$ if no two edges of $T$ receive the same color. For
a vertex set $S\subseteq V(G)$, a tree that connects $S$ in $G$ is
called an {\it $S$-tree}. The minimum number of colors that are
needed in an edge-coloring of $G$ such that there is a rainbow
$S$-tree for every $k$-set $S$ of $V(G)$ is called the {\it
$k$-rainbow index} of $G$, denoted by $rx_k(G)$. Notice that an
lower bound and an upper bound of the $k$-rainbow index of a graph
with order $n$ is $k-1$ and $n-1$, respectively. Chartrand et al.
got that the $k$-rainbow index of a tree with order $n$ is $n-1$ and
the $k$-rainbow index of a unicyclic graph with order $n$ is $n-1$
or $n-2$. Li and Sun raised the open problem of characterizing the
graphs of order $n$ with $rx_k(G)=n-1$ for $k\geq 3$. In early
papers we characterized the graphs of order $n$ with 3-rainbow index
2 and $n-1$. In this paper, we focus on $k=4$, and characterize the
graphs of order $n$ with 4-rainbow index 3 and $n-1$, respectively.

{\flushleft\bf Keywords}: rainbow $S$-tree, $k$-rainbow index.

{\flushleft\bf AMS subject classification 2010}: 05C05, 05C15, 05C75.

\end{abstract}

\section{Introduction}

All graphs considered in this paper are simple, finite and
undirected. We follow the terminology and notation of Bondy and
Murty \cite{Bondy}. Let $G$ be a nontrivial connected graph with an
edge-coloring $c: E(G)\rightarrow \{1,2,\ldots,q\}$, $q\in
\mathbb{N}$, where adjacent edges may be colored the same. A path of
$G$ is a \emph{rainbow path} if any two edges of the path have
distinct colors. $G$ is \emph{rainbow connected} if any two vertices
of $G$ are connected by a rainbow path. The minimum number of colors
required to make $G$ rainbow connected is called its \emph{rainbow
connection number}, denoted by $rc(G)$. Results on the rainbow
connectivity can be found in \cite{Cai, Caro, Chartrand1,
ChartrandZhang, Chartrand, Lishisun, Lisun}.

These concepts were introduced by Chartrand et al. in
\cite{Chartrand1}. In \cite{Zhang}, they generalized the concept of
rainbow path to rainbow tree. A tree $T$ in $G$ is called a
$rainbow~tree$ if no two edges of $T$ receive the same color. For
$S\subseteq V (G)$, a $rainbow\ S$-$tree$ is a rainbow tree that
connects $S$. Given a fixed integer $k$ with $2\leq k \leq n$, the
edge-coloring $c$ of $G$ is called a $k$-$rainbow~coloring$ of $G$
if for every set $S$ of $k$ vertices of $G$, there exists a rainbow
$S$-tree, and we say that $G$ is $k$-$rainbow~connected$. The
$k$-$rainbow~index$ $rx_k(G)$ of $G$ is the minimum number of colors
that are needed in a $k$-$rainbow~coloring$ of $G$. Clearly, when
$k=2$, $rx_2(G)$ is nothing new but the rainbow connection number
$rc(G)$ of $G$. For every connected graph $G$ of order $n$, it is
easy to see that $rx_2(G)\leq rx_3(G)\leq \cdots \leq rx_n(G)$.

The $Steiner~distance$ $d_G(S)$ of a set $S$ of vertices in $G$ is
the minimum size of a tree in $G$ that connects $S$. Such a tree is
called a $Steiner~S$-$tree$ or simply an $S$-$tree$. The
$k$-$Steiner~diameter$ $sdiam_k(G)$ of $G$ is the maximum Steiner
distance of $S$ among all sets $S$ with $k$ vertices in $G$. Then
there is a simple upper bound and lower bound for $rx_k(G)$.

\begin{obs}[\cite{Zhang}]\label{lowerupperbound}
For every connected graph $G$ of order $n\geq 3$ and each integer
$k$ with $3\leq k\leq n$, we have $k-1\leq sdiam_k(G)\leq
rx_k(G)\leq n-1$.
\end{obs}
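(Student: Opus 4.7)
The plan is to establish the chain $k-1\leq sdiam_k(G)\leq rx_k(G)\leq n-1$ by handling each of the three inequalities separately, since each rests on a different idea and none depends on the others.

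For the leftmost inequality $k-1\leq sdiam_k(G)$, I would fix any $k$-set $S\subseteq V(G)$ and note that any $S$-tree must contain all $k$ vertices of $S$, hence has at least $k$ vertices and therefore at least $k-1$ edges. This gives $d_G(S)\geq k-1$ for every such $S$, and the conclusion follows by taking the maximum over all $k$-subsets.

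For the middle inequality $sdiam_k(G)\leq rx_k(G)$, I would fix an optimal $k$-rainbow coloring using $rx_k(G)$ colors. By definition, every $k$-set $S$ admits a rainbow $S$-tree $T_S$; since $T_S$ uses pairwise distinct colors on its edges, $|E(T_S)|\leq rx_k(G)$. Because $d_G(S)$ is the minimum size of an $S$-tree, $d_G(S)\leq |E(T_S)|\leq rx_k(G)$, and maximizing over $S$ yields the bound.

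For the rightmost inequality $rx_k(G)\leq n-1$, I would exhibit an explicit $k$-rainbow coloring: take any spanning tree $T$ of $G$ (which exists since $G$ is connected), assign its $n-1$ edges pairwise distinct colors, and color the remaining edges of $G$ arbitrarily from the same palette. For any $k$-set $S$, the minimal subtree of $T$ containing $S$ is a rainbow $S$-tree. This witnesses a valid $k$-rainbow coloring with $n-1$ colors. Each step is essentially a one-line argument, so the statement poses no real obstacle; the only conceptually substantive point is the middle inequality, which bridges the chromatic parameter $rx_k$ and the purely metric parameter $sdiam_k$ by using a single rainbow tree as a simultaneous witness for both quantities.
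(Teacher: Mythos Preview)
Your argument is correct in every step: the lower bound $k-1\leq sdiam_k(G)$ follows because any $S$-tree has at least $k$ vertices and hence at least $k-1$ edges; the middle inequality holds because a rainbow $S$-tree under an optimal coloring has at most $rx_k(G)$ edges and is itself an $S$-tree; and the upper bound follows from a rainbow spanning tree. Note, however, that the paper does not supply its own proof of this observation---it is quoted from \cite{Zhang} without argument---so there is no in-paper proof to compare against. What you have written is precisely the standard justification one would expect for this cited fact.
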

It is easy to get the following observations.

\begin{obs}[\cite{Zhang}]\label{cutedge}
Let $G$ be a connected graph of order $n$ containing two bridges $e$ and $f$. For each
integer $k$ with $2\leq k\leq n$, every $k$-rainbow coloring of $G$ must assign distinct
colors to $e$ and $f$.
\end{obs}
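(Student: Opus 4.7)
The plan is to exhibit, for any two bridges $e,f$ of $G$, a $k$-subset $S\subseteq V(G)$ such that every $S$-tree contains both $e$ and $f$; the conclusion then follows at once, because the existence of a rainbow $S$-tree forces the edges of that tree, and in particular $e$ and $f$, to receive pairwise distinct colors.

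First I would pin down the component structure of $G-\{e,f\}$. Since $e$ is a bridge, $G-e$ has exactly two components, and because deleting an edge cannot create new cycles, $f$ is still a bridge of $G-e$. Hence $G-\{e,f\}$ has exactly three components $C_1,C_2,C_3$, and each of $e,f$ has its two endpoints in distinct $C_i$'s. Since adding $e,f$ back must restore connectivity, the multigraph on the vertex set $\{C_1,C_2,C_3\}$ whose two edges are $e,f$ must itself be connected; after relabelling, $e$ joins $C_1$ to $C_2$ and $f$ joins $C_2$ to $C_3$.

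Next I would pick any $a\in C_1$, $c\in C_3$, and add $k-2$ further vertices of $G$ to form a set $S$ with $|S|=k$ (possible since $k\le n$). Any $S$-tree $T$ contains a path from $a$ to $c$. In $G-e$ the component containing $C_1$ is exactly $C_1$, and in $G-f$ the component containing $C_3$ is exactly $C_3$; thus in either $G-e$ or $G-f$ the vertices $a$ and $c$ lie in different components, so any path from $a$ to $c$ in $G$ must use both $e$ and $f$. Therefore $e,f\in E(T)$, and if $T$ is rainbow then $c(e)\ne c(f)$, as required.

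The argument is purely structural, so I do not foresee a genuine obstacle. The one point meriting care is confirming the three-component picture uniformly, including the case where $e$ and $f$ share an endpoint; this is handled by the cycle-free definition of ``bridge'', which imposes no restriction on whether the two bridges have a common vertex.
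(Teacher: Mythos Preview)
Your argument is correct. The paper itself does not supply a proof of this observation---it is quoted from \cite{Zhang}---so there is no in-paper proof to compare against; your structural argument via the three components of $G-\{e,f\}$ and a set $S$ meeting the two outer components is the standard one, and the edge case where $e$ and $f$ share an endpoint is indeed absorbed by the same component count.
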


\begin{obs}[\cite{CLYZ}]\label{subgraph}
Let $G$ be a connected graph of order $n$, and $H$ be a connected spanning subgraph of
$G$. Then $rx_k(G)\leq rx_k(H)$.
\end{obs}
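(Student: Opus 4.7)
The plan is to produce an explicit $k$-rainbow coloring of $G$ from an optimal $k$-rainbow coloring of $H$, thereby realizing the inequality $rx_k(G)\le rx_k(H)$. The key point is that $H$ is a \emph{spanning} subgraph of $G$, so $V(H)=V(G)$, and every $k$-subset $S\subseteq V(G)$ is also a $k$-subset of $V(H)$. Hence anything that rainbow-connects $S$ inside $H$ already rainbow-connects $S$ inside $G$; we only need to worry about how to color the extra edges $E(G)\setminus E(H)$ without spoiling any of the rainbow $S$-trees we obtain from $H$.

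First I would fix an optimal $k$-rainbow coloring $c_H:E(H)\to\{1,\dots,rx_k(H)\}$ of $H$. Next I would define a coloring $c_G:E(G)\to\{1,\dots,rx_k(H)\}$ by setting $c_G(e)=c_H(e)$ for $e\in E(H)$ and $c_G(e)=1$ (or any existing color) for every $e\in E(G)\setminus E(H)$. Then for any $k$-set $S\subseteq V(G)$, the coloring $c_H$ already supplies a rainbow $S$-tree $T\subseteq H$; since $c_G$ agrees with $c_H$ on $E(H)\supseteq E(T)$, the tree $T$ is still a rainbow $S$-tree under $c_G$, and since $T\subseteq H\subseteq G$, it is a rainbow $S$-tree of $G$. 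Thus $c_G$ is a $k$-rainbow coloring of $G$ using at most $rx_k(H)$ colors, which gives $rx_k(G)\le rx_k(H)$.

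There is really no obstacle to this argument; it is a direct "extend the coloring" trick. The only point worth flagging is that the colors assigned to edges in $E(G)\setminus E(H)$ are irrelevant to the inequality: we never use those edges in the rainbow $S$-trees we exhibit, so however we color them, the bound $rx_k(H)$ is preserved. (In particular, the argument does not go through if $H$ is merely a subgraph rather than a spanning subgraph, because then some $k$-sets of $V(G)$ would not lie in $V(H)$ at all.)
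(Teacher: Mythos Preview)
Your argument is correct and is exactly the standard one: restrict an optimal $k$-rainbow coloring of $H$ to $G$ by keeping the colors on $E(H)$ and coloring the remaining edges arbitrarily; every rainbow $S$-tree in $H$ remains a rainbow $S$-tree in $G$. The paper itself does not supply a proof of this observation---it is simply quoted from \cite{CLYZ}---so there is nothing further to compare.
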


The following is an immediate consequence of the observations above.
Namely, trees attain the upper bound of $k$-rainbow index,
regardless of the value of $k$.

\begin{pro}[\cite{Zhang}]\label{tree}
Let $T$ be a tree of order $n\geq 3$. For each integer $k$ with $3\leq k\leq n$, $rx_k(T)=n-1$.
\end{pro}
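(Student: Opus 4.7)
The proof should be short because the two observations already quoted do most of the work, so my plan is just to combine them cleanly.

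First, the upper bound $rx_k(T)\le n-1$ is immediate from Observation \ref{lowerupperbound}, which applies to every connected graph of order $n\ge 3$ and $3\le k\le n$. Since $T$ is connected and has order $n$, there is nothing to prove on this side beyond quoting the observation.

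For the matching lower bound, the key point is the well-known structural fact that in a tree every edge is a bridge. So $T$ has exactly $n-1$ edges, and any two of them are a pair of bridges of $T$. Now invoke Observation \ref{cutedge}: for every $k$ with $2\le k\le n$, and in particular for $k\ge 3$, any $k$-rainbow coloring of a connected graph must give distinct colors to any two bridges. Applying this to each pair of edges of $T$ shows that a $k$-rainbow coloring of $T$ must use $n-1$ pairwise distinct colors, hence $rx_k(T)\ge n-1$.

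Combining the two bounds yields $rx_k(T)=n-1$, which is the desired equality. There is no genuine obstacle here; the only thing to be a little careful about is that Observation \ref{cutedge} is stated for $k\ge 2$ while the proposition assumes $k\ge 3$, so its hypothesis is trivially satisfied, and that $n\ge 3$ guarantees $T$ has at least two edges so that ``every pair of bridges'' is a nonempty condition. If one preferred not to invoke Observation \ref{cutedge} as a black box, one could instead, for any edge $e=uv$ of $T$, pick a $k$-set $S$ of vertices that straddles the two components of $T-e$ together with enough vertices that force a second prescribed edge $f$ into the unique $S$-tree, and argue that $e$ and $f$ must get different colors; but this is exactly the content of Observation \ref{cutedge}, so the direct appeal is the cleanest route.
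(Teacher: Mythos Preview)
Your proof is correct and matches the paper's approach exactly: the paper presents this proposition as ``an immediate consequence of the observations above,'' meaning Observations~\ref{lowerupperbound} and~\ref{cutedge}, and you have simply spelled out that immediate deduction (upper bound from Observation~\ref{lowerupperbound}, lower bound from the fact that all $n-1$ edges of a tree are bridges together with Observation~\ref{cutedge}).
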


In \cite{Zhang}, they also showed that the $k$-rainbow index of a unicyclic graph is $n-1$ or $n-2$.

\begin{thm}[\cite{Zhang}]\label{unicyclic}
If $G$ is a unicyclic graph of order $n\geq 3$ and girth $g\geq 3$, then
\begin{equation}
 rx_k(G)=
   \begin{cases}
      n-2, & \text{$k=3$ and $g\geq4$}; \\
     n-1, & \text{$g=3$ or $4\leq k\leq n$}.
    \end{cases}
\end{equation}
\end{thm}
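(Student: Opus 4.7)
Proof plan.

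The argument splits along the two cases of the stated formula. Let $C$ denote the unique cycle of $G$ and write $g=|V(C)|$.

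\textbf{Case 1: $k=3$ and $g\geq 4$.} For the upper bound I would construct an explicit $3$-rainbow coloring with $n-2$ colors. Since $g\geq 4$, choose two non-adjacent edges $e_1,e_2\in E(C)$ and color both with color $1$; give each of the remaining $n-2$ edges its own private color. For any $3$-set $S$, the Steiner tree of $S$ taken inside the spanning tree $G-e_1$ avoids $e_1$, so it contains at most one color-$1$ edge (namely $e_2$, at most) and is rainbow. For the matching lower bound I would assume for contradiction that some $3$-rainbow coloring uses $\leq n-3$ colors; then the $n$ edges contain at least three pairs of same-colored edges, and Observation~\ref{cutedge} forbids repetitions between two bridges, so every repetition involves at least one cycle edge. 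A case analysis on the positions of these collisions on $C$ then yields three vertices (possibly substituted by leaves of attached branches) whose three pairwise cycle arcs collectively cover two collisions, forcing every $S$-tree to traverse both ends of one collision and contradicting rainbowness.

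\textbf{Case 2: $g=3$ or $k\geq 4$.} The upper bound $rx_k(G)\leq n-1$ is immediate from Observation~\ref{lowerupperbound}. For the lower bound I would suppose that a $k$-rainbow coloring uses at most $n-2$ colors; Observation~\ref{cutedge} forces pairwise distinct colors on all $n-g$ bridges, so only cycle edges can be involved in collisions. When $g=3$ the triangle has only three edges and any two of them are adjacent, so a collision forces two of the triangle edges to share a color: choosing a $3$-set consisting of one leaf in a non-trivial branch at each of two triangle vertices together with the third triangle vertex, every $S$-tree is obliged to include two triangle edges and hence in some orientation traps the collision. When $k\geq 4$ I would use the extra element(s) of $S$ to cover a further arc of $C$, so that any $S$-tree has to use $g-1$ cycle edges together with all of the attached branch edges; this forces $n-1$ distinct colors on the $S$-tree, contradicting the $(n-2)$-color assumption.

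\textbf{Main obstacle.} The hardest step is the lower bound in Case 1. Knowing that two non-adjacent cycle edges can share a color is not enough to rule out a third color saving, and the adversarial $3$-set that defeats such a coloring must be tailored to the positions of the collisions among cycle edges and bridges. I expect to organize this by fixing, up to cyclic symmetry, the position of one collision on $C$ and then arguing that each possible placement of the next two collisions creates a bottleneck arc on $C$; substituting leaves of branches for cycle vertices when needed will steer the relevant Steiner trees through that bottleneck.
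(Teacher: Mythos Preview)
The paper does not prove this theorem; it is quoted from \cite{Zhang} as a known result, so there is no proof here to compare against. That said, your plan has two substantive gaps.

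\textbf{Case 1, upper bound.} Your construction does not reach $n-2$ colors. A unicyclic graph has exactly $n$ edges; if you put color $1$ on two cycle edges and give each of the remaining $n-2$ edges its own private color, you have used $1+(n-2)=n-1$ colors, which only reproduces the trivial bound $rx_3(G)\le n-1$. To hit $n-2$ you must save \emph{two} colors, so either three edges share a color or two separate pairs do, and you must still certify that every $3$-set has a rainbow tree. The standard route is to color the cycle $C_g$ with a $(g-2)$-color $3$-rainbow coloring of $C_g$ and give each of the $n-g$ bridges a fresh color. In particular, the very feature you plan to exploit for the lower bound (that three color collisions on the cycle are fatal) shows why a single collision cannot suffice for the upper bound.

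\textbf{Case 2, lower bound for $k\ge 4$.} The Steiner-distance argument you sketch does not go through. You want a $k$-set $S$ whose every $S$-tree uses $g-1$ cycle edges plus all branch edges, i.e.\ has $n-1$ edges, and then invoke pigeonhole against the $n-2$ colors. But already for $G=C_n$ with $n\ge 5$ no $4$-set has Steiner distance $n-1$: four points split the cycle into four arcs, the longest of which has length at least $2$, so the minimum $S$-tree has at most $n-2$ edges. The lower bound here genuinely needs a color-class analysis (as in the paper's Lemma~\ref{lem3} for cacti): from $n-2$ colors on $n$ edges you get excess $2$; since bridges are pairwise distinctly colored, the repeats live on $C$; then one chooses $S$ so that every $S$-tree must traverse at least three of the four (or three) offending cycle edges, forcing a repeated color by pigeonhole. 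Your $g=3$ subcase has the same defect in miniature: you only treat the situation where two triangle edges share a color, but the excess could instead come from two triangle edges each sharing a color with a bridge, and you do not say how to defeat that.
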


Notice that an lower bound and an upper bound of the $k$-rainbow
index of a graph with order $n$ is $k-1$ and $n-1$, respectively. In
\cite{Lisun}, the authors raised an open problem: for $k\geq 3$,
characterize the graphs of order $n$ with $rx_k(G)=n-1$. It is not
easy to settle down the problem for general $k$. In \cite{CLYZ} and
\cite{LYZ}, we characterized the graphs of order $n$ with 3-rainbow
index 2 and $n-1$, respectively. In this paper we mainly deal with
the 4-rainbow index of graphs with order $n$. More specifically,
characterize the graphs of order $n$ whose 4-rainbow index is 3 and
$n-1$, respectively.

\section{Characterize the graphs with $rx_4(G)\bf{=3}$}

First we give a necessary and sufficient condition for $rx_4(G)=3$.
Note that if a connected graph of order 4 has three colors, then it
has a rainbow spanning tree. Thus, the following lemma holds.

\begin{lem}\label{4vertices}
Let $G$ be a connected graph of order $n$ ($n\geq4$). Then
$rx_4(G)=3$ if and only if each induced subgraph of $G$ with order
$4$ is connected and has three different colors.
\end{lem}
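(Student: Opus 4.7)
The plan is to prove both directions of the equivalence. The forward implication reduces to a simple cardinality check on the rainbow $S$-tree, and the reverse implication reduces to a spanning-tree exchange argument showing that any connected, tricolored graph on four vertices admits a rainbow spanning tree.

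For the forward direction, I would fix a 3-rainbow coloring witnessing $rx_4(G)=3$. Given any 4-set $S\subseteq V(G)$, let $T$ be a rainbow $S$-tree. Since $T$ spans $S$ we have $|E(T)|\geq 3$, while $T$ being rainbow in a coloring using only three colors forces $|E(T)|\leq 3$. Hence $|E(T)|=3$ and $|V(T)|=4$, so $V(T)=S$; in particular, $T$ is a spanning tree of $G[S]$, which is therefore connected, and its three edges carry all three colors.

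For the reverse direction, I would assume that a 3-coloring $c$ makes every induced 4-vertex subgraph connected and tricolored, and show that each such $G[S]$ contains a rainbow spanning tree (a rainbow $S$-tree, yielding $rx_4(G)\leq 3$; the matching lower bound $rx_4(G)\geq 3$ comes from Observation \ref{lowerupperbound}). Pick a spanning tree $T^*$ of $G[S]$ maximizing the number of distinct colors and suppose for contradiction that $T^*$ uses at most two colors. By the tricolor hypothesis there is an edge $e\in E(G[S])\setminus E(T^*)$ whose color is absent from $T^*$. Adding $e$ to $T^*$ creates a unique cycle $C$, and since $G$ is simple the $T^*$-path completing $C$ has length at least two, so $|E(C)\cap E(T^*)|\geq 2$. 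I would then pick $f\in E(C)\cap E(T^*)$ whose color has multiplicity at least two in $T^*$; the replacement $T^*-f+e$ is a spanning tree of $G[S]$ strictly richer in colors than $T^*$, contradicting maximality.

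The main delicacy is guaranteeing that the swap edge $f$ of the correct color exists. If $T^*$ is monochromatic, any $f\in E(C)\cap E(T^*)$ works and upgrades the color count from one to two. If $T^*$ is bichromatic, the rare color occupies only one edge of $T^*$ and hence accounts for at most one of the $\geq 2$ edges in $C\cap T^*$; the remaining edge has the frequent color, and swapping it out preserves the rare color in $T^*-f$ while inserting the missing third color via $e$. This pigeonhole bookkeeping avoids any case analysis on the isomorphism type of $G[S]$ (tree, $C_4$, triangle-plus-pendant, $K_4-e$, or $K_4$), and completes the proof.
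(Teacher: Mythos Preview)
Your proof is correct and follows the same core idea as the paper, which merely asserts (without proof) that ``if a connected graph of order 4 has three colors, then it has a rainbow spanning tree'' and declares the lemma immediate. Your forward direction is the obvious cardinality count, and your reverse direction supplies a clean spanning-tree exchange argument for the paper's unproved assertion, avoiding any case analysis over the isomorphism types of $G[S]$; this is more than the paper actually writes down, but it fills exactly the gap the paper leaves.
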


Next we give some necessary conditions for $rx_4(G)=3$. By Lemma
\ref{4vertices}, it is easy to get the following proposition.

\begin{pro}\label{Delta}
Let $G$ be a graph of order $n$ with $rx_4(G)=3$, where $n\geq5$.
Then $\delta(G)\geq n-3$ and $\Delta(\overline{G})\leq 2$. In other
words, $\overline{G}$ is the union of some paths (may be trivial)
and cycles.
\end{pro}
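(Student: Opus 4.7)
The plan is to derive both conclusions from Lemma \ref{4vertices}, which asserts that every induced subgraph on $4$ vertices must be connected. The two claims $\delta(G)\geq n-3$ and $\Delta(\overline{G})\leq 2$ are logically equivalent (since $\deg_{\overline{G}}(v)=n-1-\deg_G(v)$), so it suffices to prove one of them, say $\Delta(\overline{G})\leq 2$.

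I would argue by contradiction. Suppose there exists a vertex $v\in V(G)$ with $\deg_{\overline{G}}(v)\geq 3$. Then $v$ has at least three non-neighbors $u_1,u_2,u_3$ in $G$. Consider the subgraph of $G$ induced on $S=\{v,u_1,u_2,u_3\}$. In $G[S]$ the vertex $v$ is adjacent to none of $u_1,u_2,u_3$, hence $v$ is isolated in $G[S]$ and $G[S]$ is disconnected. Since $|S|=4$, this directly contradicts Lemma \ref{4vertices} (which applies because $rx_4(G)=3$). Therefore every vertex of $\overline{G}$ has degree at most $2$, i.e., $\Delta(\overline{G})\leq 2$, equivalently $\delta(G)\geq n-3$.

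The second assertion (the structural description of $\overline{G}$) is immediate: a graph with maximum degree at most $2$ is a disjoint union of paths (including isolated vertices, viewed as trivial paths) and cycles, a standard fact which I would simply cite.

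The main (indeed, the only) obstacle is spotting the right $4$-set on which to apply Lemma \ref{4vertices}; once the induced set $\{v,u_1,u_2,u_3\}$ is chosen, the isolated vertex immediately produces the contradiction. No detailed color analysis is needed since connectivity of the induced $4$-vertex subgraph is already packaged into the lemma.
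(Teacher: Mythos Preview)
Your proof is correct and follows exactly the approach indicated in the paper, which simply states that the proposition follows immediately from Lemma~\ref{4vertices}. Your choice of the $4$-set $\{v,u_1,u_2,u_3\}$ with $v$ isolated is precisely the intended argument.
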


For fixed integers $p$, $q$, an edge-coloring of a complete graph
$K_n$ is called a $(p, q)$-$coloring$ if the edges of every
$K_p\subseteq K_n$ are colored with at least $q$ distinct colors.
Clearly, $(p, 2)$-colorings are the classical Ramsey colorings
without monochromatic $K_p$ as subgraphs. Let $f(n,p,q)$ be the
minimum number of colors needed for a $(p,q)$-coloring of $K_n$. In
\cite{erdos}, Erd$\ddot{o}$s got that $f(10,4,3)=4$, and so the
following proposition holds.

\begin{pro}\label{dianshu}
Let $G$ be a graph of order $n$ with $rx_4(G)=3$. Then $n\leq9$.
\end{pro}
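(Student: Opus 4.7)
My plan is to argue by contradiction: suppose $n\geq 10$ and fix an optimal edge-coloring $c:E(G)\to\{1,2,3\}$ realizing $rx_4(G)=3$. I would push $c$ up to a $3$-edge-coloring $c'$ of the complete graph $K_n$ on the vertex set $V(G)$, and then derive a contradiction with Erd\H{o}s's bound $f(10,4,3)=4$.

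The extension is the obvious one: set $c'(e)=c(e)$ for $e\in E(G)$ and $c'(e)=1$ (say) for every non-edge of $G$, i.e.\ every edge of $\overline{G}$. Clearly $c'$ uses at most $3$ colors. The crucial claim is that $c'$ is a $(4,3)$-coloring of $K_n$. To see this, let $S\subseteq V(G)$ with $|S|=4$. By Lemma \ref{4vertices}, the induced subgraph $G[S]$ uses three different colors, and since only three colors are available, $G[S]$ must use each of $\{1,2,3\}$. The $K_4$ on $S$ inside $K_n$ contains every edge of $G[S]$ (plus possibly further edges colored $1$ under $c'$), so this $K_4$ also displays all three colors.

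Therefore $f(n,4,3)\leq 3$. On the other hand, $f(\cdot,4,3)$ is monotone in its first argument, because the restriction of a $(4,3)$-coloring of $K_n$ to any sub-$K_m$ is again a $(4,3)$-coloring using no more colors; combined with Erd\H{o}s's theorem this gives $f(n,4,3)\geq f(10,4,3)=4$ whenever $n\geq 10$. This contradicts $f(n,4,3)\leq 3$ and hence forces $n\leq 9$. I do not foresee a genuine obstacle; the only delicate observation is that ``three distinct colors out of three available'' forces all of $\{1,2,3\}$ to actually appear on $G[S]$, which is exactly what allows us to color the non-edges of $G$ arbitrarily without destroying the $(4,3)$-property on the ambient $K_n$.
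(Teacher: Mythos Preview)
Your argument is correct and follows the same route as the paper: both deduce the bound from Erd\H{o}s and Gy\'arf\'as's result $f(10,4,3)=4$. The paper's proof is a single sentence that leaves the passage from $G$ to $K_n$ implicit, whereas you spell out the extension step (coloring the non-edges arbitrarily and invoking Lemma~\ref{4vertices} to guarantee all three colors already appear on $G[S]$); this is exactly the detail needed to make the one-line proof rigorous, so there is nothing to add.
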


By Lemma \ref{4vertices} and Theorem \ref{unicyclic}, we get the following proposition.

\begin{pro}\label{c4c5}
Let $G$ be a connected graph of order $n$ ($n\geq4$) with
$rx_4(G)=3$. Then $\overline{G}$ contains neither $C_4$ nor $C_5$.
\end{pro}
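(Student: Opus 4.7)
The plan is to treat the cases $C_4$ and $C_5$ separately. Throughout I will use the fact, from Proposition~\ref{Delta}, that $\overline{G}$ is a disjoint union of paths and cycles; hence any copy of $C_4$ or $C_5$ in $\overline{G}$ must be an entire cycle component, and in particular is induced in $\overline{G}$. Consequently its vertex set induces the corresponding complement graph in $G$.

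For the $C_4$ case, suppose $\overline{G}$ contains $C_4$ on vertices $v_1,v_2,v_3,v_4$. Then $G[\{v_1,v_2,v_3,v_4\}]$ is the complement of $C_4$, namely $2K_2$, which is disconnected. This directly contradicts the conclusion of Lemma~\ref{4vertices} that every induced $4$-vertex subgraph of $G$ must be connected, and the case is complete.

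For the $C_5$ case, suppose $\overline{G}$ contains $C_5$ on vertices $v_1,\ldots,v_5$, and set $H := G[\{v_1,\ldots,v_5\}]$. Then $H$ is the complement of $C_5$, which is again isomorphic to $C_5$. Restrict the optimal $3$-edge-coloring of $G$ (witnessing $rx_4(G)=3$) to $H$: every induced $4$-vertex subgraph of $H$ is also an induced $4$-vertex subgraph of $G$, and hence by Lemma~\ref{4vertices} is connected and uses three distinct colors; the ``if'' direction of Lemma~\ref{4vertices}, applied to $H$, then yields $rx_4(H) = 3$. But $H\cong C_5$ is unicyclic of order $5$ with girth $5$, so Theorem~\ref{unicyclic} gives $rx_4(H) = 4$, a contradiction. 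The main subtlety I expect is this inheritance step---that the coloring hypothesis of Lemma~\ref{4vertices} passes from $G$ to $H$ under restriction---which follows from the fact that induced $4$-subsets of $H$ are simultaneously induced $4$-subsets of $G$; beyond this, both cases reduce to the elementary complement computations $\overline{C_4}=2K_2$ and $\overline{C_5}\cong C_5$.
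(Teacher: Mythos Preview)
Your argument is correct and matches the paper's intended proof, which simply cites Lemma~\ref{4vertices} and Theorem~\ref{unicyclic} without spelling out the details; your inheritance step for the restricted coloring on $H\cong C_5$ is exactly the right way to make the appeal to Theorem~\ref{unicyclic} precise. One cosmetic point: Proposition~\ref{Delta} is stated only for $n\ge 5$, but your $C_4$ argument does not actually need the inducedness it provides---any copy of $C_4$ in $\overline{G}$ already forces those four vertices to span a subgraph of $2K_2$ in $G$, contradicting Lemma~\ref{4vertices}---and the $C_5$ case is vacuous when $n=4$.
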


When $G$ is a graph of order 4, it is obvious that $rx_4(G)=3$ if
and only if $G$ is connected. Now we divide into five cases by the
value of $n$ with $5\leq n\leq 9$.

\begin{lem}\label{n=5}
Let $G$ be a connected graph of order 5. Then $rx_4(G)=3$ if and
only if $\overline{G}$ is a subgraph of $P_5$ or $K_2\cup K_3$.
\end{lem}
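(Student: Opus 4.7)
The plan is to characterize $\overline G$ via the structural restrictions from Propositions \ref{Delta} and \ref{c4c5} and then handle the two directions separately.

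For the ``only if'' direction, I start from $rx_4(G)=3$ and invoke Proposition \ref{Delta}, which forces $\Delta(\overline G)\le 2$, so $\overline G$ is a disjoint union of paths and cycles; Proposition \ref{c4c5} then forbids $C_4$ and $C_5$, leaving $C_3$ as the only admissible cycle component. Since $|V(\overline G)|=5$, a short enumeration shows that $\overline G$ must be one of the seven linear forests $5K_1$, $P_2\cup 3K_1$, $2P_2\cup K_1$, $P_3\cup 2K_1$, $P_3\cup P_2$, $P_4\cup K_1$, $P_5$, each a spanning subgraph of $P_5$, or one of the two triangle-containing graphs $C_3\cup 2K_1$ and $C_3\cup K_2$, each a spanning subgraph of $K_2\cup K_3$. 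This already yields the desired conclusion.

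For the ``if'' direction, the lower bound $rx_4(G)\geq 3$ is immediate from Observation \ref{lowerupperbound}. For the upper bound, Observation \ref{subgraph} says that whenever $\overline G\subseteq \overline H$ on the same vertex set, $H$ is a connected spanning subgraph of $G$ and $rx_4(G)\leq rx_4(H)$. Hence it suffices to verify $rx_4(H)=3$ in the two ``densest'' admissible cases $\overline H=P_5$ and $\overline H=K_2\cup K_3$, i.e.\ for $H=\overline{P_5}$ and $H=K_{2,3}$. For each I will exhibit an explicit $3$-edge-coloring and then apply Lemma \ref{4vertices}, which reduces the verification to checking that every induced subgraph on $4$ vertices is connected and receives three distinct colors; with only $\binom{5}{4}=5$ subsets to inspect in each case, this step is a routine finite check.

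The main obstacle is guessing workable $3$-colorings for the two extremal graphs. Both $\overline{P_5}$ and $K_{2,3}$ have six edges while only three colors are available, so the choice is tight and some trial and error, guided by the automorphisms of each graph (the bipartition symmetry of $K_{2,3}$ and the reflective symmetry of $\overline{P_5}$), will be needed to land on a coloring whose restriction to each four-vertex induced subgraph still uses all three colors. Once a candidate is produced, the verification via Lemma \ref{4vertices} is mechanical.
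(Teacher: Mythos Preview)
Your proposal is correct and follows essentially the same approach as the paper: both directions rely on Propositions \ref{Delta} and \ref{c4c5} to narrow $\overline{G}$ down, then Observation \ref{subgraph} to reduce the ``if'' direction to explicit $3$-colorings of the two extremal graphs $\overline{P_5}$ and $K_{2,3}$. Your write-up is slightly more explicit (you spell out the full enumeration of admissible $\overline{G}$'s, invoke Observation \ref{lowerupperbound} for the lower bound, and name Lemma \ref{4vertices} as the verification tool), but the argument is the same.
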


\begin{proof}
Let $G$ be a graph with $rx_4(G)=3$. By Proposition \ref{Delta}, it
is easy to check that if $\overline{G}$ is not a subgraph of $P_5$
or $K_2\cup K_3$, then $\overline{G}$ is isomorphic to $C_4$ or
$C_5$, a contradiction by Proposition \ref{c4c5}.

Conversely, by Observation \ref{subgraph}, we need to provide an
edge-coloring $C: E\rightarrow \{1,2,3\}$ of $G$ when $\overline{G}$
is isomorphic to $P_5$ or $K_2\cup K_3$. Suppose $\overline{G}$ is
isomorphic to $P_5$, denote $V(\overline{G})=\{v_1,\cdots,v_5\}$ and
$E(\overline{G})=\{v_1v_2,v_2v_3,v_3v_4,v_4v_5\}$. Set
$c(v_1v_3)=2$, $c(v_1v_4)=1$, $c(v_1v_5)=3$, $c(v_2v_4)=3$,
$c(v_2v_5)=2$, $c(v_3v_5)=1$. Suppose $\overline{G}$ is isomorphic
to $K_2\cup K_3$, denote $V(\overline{G})=\{v_1,\cdots,v_5\}$ and
$E(\overline{G})=\{v_1v_2,v_2v_3,v_1v_3,v_4v_5\}$. Set
$c(v_1v_4)=1$, $c(v_1v_5)=2$, $c(v_2v_4)=2$, $c(v_2v_5)=3$,
$c(v_3v_4)=3$, $c(v_3v_5)=1$. It is easy to show that the two
edge-colorings make $G$ 4-rainbow connected.
\end{proof}

\begin{lem}\label{n=6}
Let $G$ be a graph of order 6. Then $rx_4(G)=3$ if and only if
$\overline{G}$ is a subgraph of $C_6$ or $2K_3$.
\end{lem}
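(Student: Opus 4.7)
The plan is to establish both directions using the structural results already in hand. For necessity, suppose $rx_4(G)=3$. By Proposition~\ref{Delta}, $\overline{G}$ is a disjoint union of paths and cycles, and by Proposition~\ref{c4c5} no component is $C_4$ or $C_5$. Since $|V(\overline{G})|=6$, the only cycles that can appear as components are $C_3$ and $C_6$, and a short case analysis closes the argument: if $\overline{G}$ is acyclic, it is a linear forest on six vertices, hence a subgraph of $P_6\subseteq C_6$; if $\overline{G}$ contains $C_6$, then $\overline{G}=C_6$; and if $\overline{G}$ contains a $C_3$-component, then the remaining three vertices span a graph of maximum degree at most $2$ containing no forbidden cycle, which is a subgraph of $K_3$, yielding $\overline{G}\subseteq 2K_3$.

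For sufficiency, Observation~\ref{subgraph} together with the trivial lower bound $rx_4(G)\geq k-1=3$ from Observation~\ref{lowerupperbound} reduces the task to proving $rx_4(G)=3$ for the two extremal complements: $\overline{G}=C_6$, where $G=\overline{C_6}$ is the triangular prism, and $\overline{G}=2K_3$, where $G=K_{3,3}$. I would apply Lemma~\ref{4vertices} in each case, so I need every induced $4$-vertex subgraph of $G$ to be connected and, under a chosen $3$-coloring, to receive all three colors. Connectedness is easy: in $K_{3,3}$ every induced $4$-vertex subgraph is $K_{2,2}$ or $K_{1,3}$, and in the triangular prism deleting any two vertices leaves a connected graph (check the three orbits of vertex pairs in $C_6$).

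The colorings I would exhibit are decompositions into three perfect matchings. For $K_{3,3}$ with parts $\{1,2,3\}$ and $\{4,5,6\}$, any proper $3$-edge-coloring such as $c(ij)\equiv i+j\pmod 3$ works. For the triangular prism with triangles $\{1,3,5\}$ and $\{2,4,6\}$ and cross-matching $\{14,25,36\}$, I would take the three perfect matchings $\{13,25,46\}$, $\{14,26,35\}$, and $\{15,24,36\}$ as color classes, then confirm validity by running through all $\binom{6}{4}=15$ induced $4$-vertex subgraphs. The main obstacle lies precisely in this verification for the prism: $4$-subsets such as $\{1,2,3,4\}$ or $\{1,4,5,6\}$ span only three edges of $\overline{C_6}$, so all three colors must appear on those three edges, and a naive choice (for instance, making each triangle rainbow and then coloring the matching arbitrarily) breaks on subsets like $\{1,3,4,6\}$. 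The matching-based decomposition above is engineered precisely so that each color appears in every $4$-subset, which is what makes the construction go through.
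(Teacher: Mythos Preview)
Your proposal is correct and follows essentially the same approach as the paper: necessity via Propositions~\ref{Delta} and~\ref{c4c5}, and sufficiency via explicit $3$-colorings of $\overline{C_6}$ and $K_{3,3}$ (the paper's colorings are likewise decompositions into three perfect matchings). Your write-up is in fact more detailed than the paper's, which simply asserts the case analysis and that the colorings work.
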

\begin{proof}
Let $G$ be a graph with $rx_4(G)=3$. By Proposition \ref{Delta}, if
$\overline{G}$ is not a subgraph of $C_6$ or $2K_3$, then
$\overline{G}$ contains $C_4$ or $C_5$, a contradiction by
Proposition \ref{c4c5}.

Conversely, by Observation \ref{subgraph}, we need to provide an
edge-coloring $C: E\rightarrow \{1,2,3\}$ of $G$ when $\overline{G}$
is isomorphic to $C_6$ or $2K_3$. Suppose $\overline{G}$ is
isomorphic to $C_6$, denote $V(\overline{G})=\{v_1,\cdots,v_6\}$ and
$E(\overline{G})=\{v_1v_3,v_1v_4,v_1v_5,v_2v_4,v_2v_5,v_2v_6,v_3v_5,v_3v_6,v_4v_6\}$.
Set $c(v_1v_3)=2$, $c(v_1v_4)=3$, $c(v_1v_5)=1$, $c(v_2v_4)=1$,
$c(v_2v_5)=2$, $c(v_2v_6)=3$, $c(v_3v_5)=3$, $c(v_3v_6)=1$,
$c(v_4v_6)=2$. Suppose $\overline{G}$ is isomorphic to $2K_3$,
denote $V(\overline{G})=\{v_1,\cdots,v_6\}$ and
$E(\overline{G})=\{v_1v_4,v_1v_5,v_1v_6,v_2v_4,v_2v_5,v_2v_6,v_3v_4,v_3v_5,v_3v_6\}$.
Set $c(v_1v_4)=3$, $c(v_1v_5)=2$, $c(v_1v_6)=1$, $c(v_2v_4)=1$,
$c(v_2v_5)=3$, $c(v_2v_6)=2$, $c(v_3v_4)=2$, $c(v_3v_5)=1$,
$c(v_3v_6)=3$. It is easy to show that the two edge-colorings make
$G$ 4-rainbow connected.
\end{proof}

It is a tedious work to check whether a graph is 4-rainbow connected
when $7 \leq n\leq 9 $. Hence we introduce an algorithm with the
idea of backtracking to deal with such cases. We should point out
that the algorithm has a good performance when $n\leq 9$, although
the time complexity is not polynomial.
\begin{tabbing}
\noindent\rule[0.25\baselineskip]{\textwidth}{2pt}
\\\textbf{Algorithm} 4-rainbow Coloring of a graph\\
\noindent\rule[0.25\baselineskip]{\textwidth}{1pt}\\
Input: a graph $G=(V,E)$ with $V=\{v_1,v_2,...,v_n\}$, $E=\{e_1,e_2,...,e_m\}$.\\
Output: give a 4-rainbow coloring $colorlist[m]$ of $G$,
or verify that $G$ has no 4-rainbow \\coloring.\\
1. reorder the edge sequence $e_1,e_2,...,e_m$, to
make sure $E(G[v_1,...,v_t]) =\{e_1,...,e_s\}$,\\$~~$ where
$s$ denotes the number of edges of $G[v_1,...,v_t]$, where $1\leq t\leq n$.\\
2. fix the color of $e_1$ with 1. Initialize $i=2$ and $colorlist=[1,0,0,...,0]$; \\
3. while $i\geq2$\\
$~~~~~~$if $i>m$\\
$~~~~~~~~~~~~$show $colorlist$; stop;\\
$~~~~~~~~~~~~$$colorlist[i]=colorlist[i]+1$;\\
$~~~~~~$if $colorlist[i]>3$\\
$~~~~~~~~~~~~$$colorlist[i]=0$; $i--$;\\
$~~~~~~$else if \textbf{Boolean CHECK}($e_i$)\\
$~~~~~~~~~~~~$$i++$;\\
4. there is no 4-rainbow coloring; stop.\\

\\Boolean CHECK($e_s$)\\
Input: a graph $G=(V,E)$ with $V=\{v_1,v_2,...,v_n\}$,
$E=\{e_1,e_2,...,e_m\}$ with the order \\described above.
Set $e_s=(v_p,v_q)$, where $p<q$. Give a coloring of
the first $s$ edges of $E(G)$. \\
Output: determine whether the given coloring is illegal.\\
1. for $i=1$ up to $q-2$ and $i\neq p$\\
$~~~~~~$for $j=i+1$ up to $q-1$ and $j\neq p$\\
$~~~~~~~~~~~~$if all edges of the induced subgraph
$G[v_i,v_j,v_p,v_q]$ are colored but $G[v_i,v_j,v_p,v_q]$ is
\\$~~~~~~~~~~~$ not 4-rainbow colored. \\
$~~~~~~~~~~~~~~~~~$return $false$; stop;\\
2. return $true$; stop.\\
\noindent\rule[0.25\baselineskip]{\textwidth}{1pt}
\end{tabbing}

\begin{lem}\label{n=7}
Let $G$ be a graph of order 7. Then $rx_4(G)=3$ if and only if
$\overline{G}$ is a subgraph of $C_6$ or $2K_2\cup K_3$ or $P_5\cup
K_2$ or $2K_3$.
\end{lem}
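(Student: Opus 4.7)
The plan is to follow the pattern set by Lemmas \ref{n=5} and \ref{n=6}: use Propositions \ref{Delta} and \ref{c4c5} to prune $\overline{G}$ down to a finite list, dispose of a handful of exceptional configurations, and then exhibit explicit $3$-colorings for the four extremal envelopes.

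I would begin with the enumeration. By Propositions \ref{Delta} and \ref{c4c5}, every component of $\overline{G}$ is isomorphic to $K_1$, $K_2$, some $P_k$ with $k\ge 3$, $K_3$, $C_6$, or $C_7$. Running through all ways of filling $7$ vertices with such components and comparing each against the envelopes $C_6$, $2K_2\cup K_3$, $P_5\cup K_2$, $2K_3$ (each extended by isolated vertices to reach order $7$), exactly four configurations of $\overline{G}$ fail to fit:
\[
 C_7,\quad P_7,\quad K_3\cup P_4,\quad P_4\cup P_3.
\]
The crucial observation is that $P_4\cup P_3$ is a spanning subgraph of each of the other three offenders: delete the edge $v_4v_5$ of $P_7$; delete two suitably chosen edges of $C_7$; delete one triangle edge of $K_3\cup P_4$. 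Hence whenever $\overline{G}$ is one of the four, $G$ is a spanning subgraph of $G_0:=K_7-E(P_4\cup P_3)$, and by Observation \ref{subgraph} one has $rx_4(G)\ge rx_4(G_0)$. So the only nontrivial step in the necessity direction is to verify $rx_4(G_0)>3$, and I would discharge this by running the backtracking algorithm described above on $G_0$; it terminates in the ``no 4-rainbow coloring'' branch.

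For the sufficiency, Observation \ref{lowerupperbound} already gives $rx_4(G)\ge 3$, so it is enough to exhibit, for each of the four extremal complements, a $3$-edge-coloring that is 4-rainbow; Observation \ref{subgraph} then propagates the coloring to every $\overline{G}$ that is a subgraph of one of the envelopes. I would build the four colorings in the style of the preceding lemmas, labeling the vertices according to the structure of $\overline{G}$, listing the edges of $G$, and choosing a color pattern adapted to the symmetry of $\overline{G}$ (for example, a cyclic $1,2,3$-assignment around $C_6$ and an analogous rotationally symmetric pattern on $2K_3$). Lemma \ref{4vertices} then reduces the verification to confirming that each of the $\binom{7}{4}=35$ induced $4$-vertex subgraphs of $G$ is connected and picks up all three colors.

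The main obstacle is the lone computation $rx_4(G_0)>3$. A purely hand-written proof would propagate Lemma \ref{4vertices} through $4$-sets of mixed type (two vertices from each component of $\overline{G}$ induce a $K_{2,2}$ in $G$, three from the $P_4$ plus one from the $P_3$ induce a triangle with a pendant, and so on) until the color constraints become inconsistent with only three colors. The bookkeeping is precisely what the backtracking algorithm is built to automate, so I would invoke it here as a black box rather than carry out the full case analysis by hand.
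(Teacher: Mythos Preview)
Your proposal is correct and follows essentially the same route as the paper: the same four exceptional complements $C_7$, $P_7$, $K_3\cup P_4$, $P_4\cup P_3$ are isolated via Propositions~\ref{Delta} and~\ref{c4c5}, the last one is identified as the minimal offender (so that Observation~\ref{subgraph} reduces everything to it), the algorithm is invoked on $\overline{P_4\cup P_3}$, and explicit $3$-colorings are supplied for the four envelopes. The paper presents the four colorings in a figure rather than by listing edge-color assignments, but the logic is identical.
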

\begin{proof}
Let $G$ be a graph with $rx_4(G)=3$. By Proposition \ref{Delta}, if
$\overline{G}$ is not a subgraph of $C_6$ or $2K_2\cup K_3$ or
$P_5\cup K_2$ or $2K_3$, then by Proposition \ref{c4c5},
$\overline{G}$ is isomorphic to $P_4\cup P_3$ or $P_4\cup K_3$ or
$P_7$ or $C_7$. By Observation \ref{subgraph}, we need only to
verify that $rx_4(G)\neq3$ when $\overline{G}$ is isomorphic to
$P_4\cup P_3$, by the algorithm, $rx_4(G)\neq3$.

Conversely, by Observation \ref{subgraph} again, we need to provide
an edge-coloring of $G$ when $\overline{G}$ is isomorphic to $C_6$
or $2K_2\cup K_3$ or $P_5\cup K_2$ or $2K_3$. The four colorings are
shown in Figure 1. It is easy to show that these four colorings make
$G$ 4-rainbow connected.
\end{proof}

\begin{figure}[h,t,b,p]
\begin{center}
\scalebox{0.7}[0.7]{\includegraphics{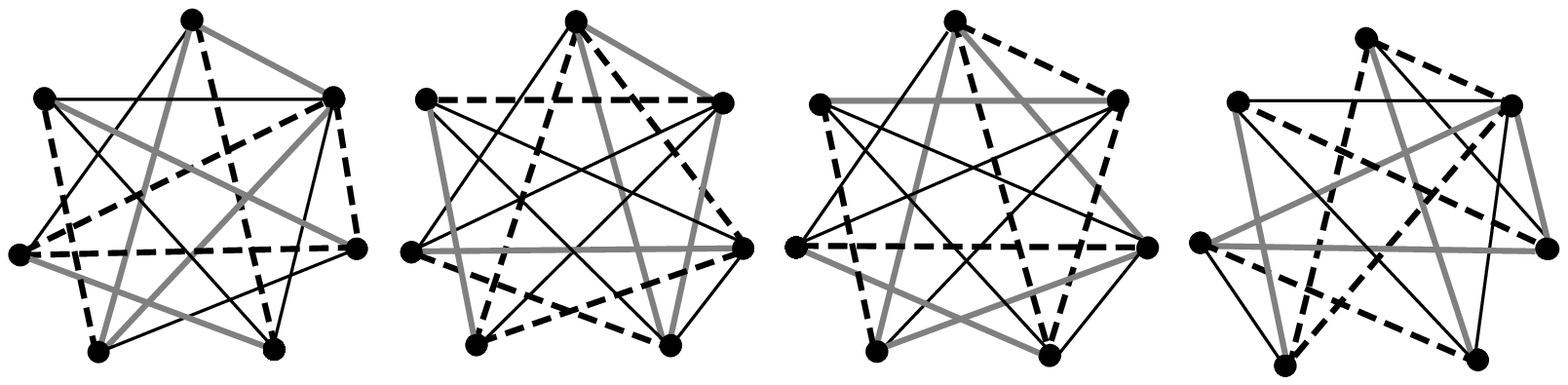}}\\[15pt]

Figure~1. Graphs for Lemma \ref{n=7} (the same type of lines stand
for the same color)
\end{center}
\end{figure}

\begin{lem}\label{n=8}
Let $G$ be a graph of order 8. Then $rx_4(G)=3$ if and only if
$\overline{G}$ is a subgraph of $K_2\cup 2K_3$ or $P_6\cup K_2$.
\end{lem}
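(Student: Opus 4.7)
The plan is to follow the template of Lemmas~\ref{n=5}--\ref{n=7}. For the necessity direction, assume $rx_4(G)=3$ and use Propositions~\ref{Delta} and~\ref{c4c5} to conclude that $\overline{G}$ is a disjoint union of paths and cycles on $8$ vertices, with no component equal to $C_4$ or $C_5$. I would then enumerate those such $\overline{G}$ which are not spanning subgraphs of either $K_2\cup 2K_3$ or $P_6\cup K_2$; this list consists of exactly eleven isomorphism classes, namely $C_8$, $C_7\cup K_1$, $C_6\cup K_2$, $C_6\cup 2K_1$, $C_3\cup P_5$, $C_3\cup P_4\cup K_1$, $P_8$, $P_7\cup K_1$, $P_5\cup P_3$, $P_4\cup P_4$, and $P_4\cup P_3\cup K_1$.

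The key step is to observe that every graph on this list contains $C_6\cup 2K_1$ or $P_4\cup P_3\cup K_1$ as a spanning subgraph: $C_6\cup K_2$ contains $C_6\cup 2K_1$ upon removing the $K_2$-edge, and all remaining nine graphs contain $P_4\cup P_3\cup K_1$ after one or two explicit edge deletions (for instance, three well-chosen edges of $C_8$ may be removed to leave $P_4\cup P_3\cup K_1$; for $C_7\cup K_1$ one deletes two edges of the cycle; for $C_3\cup P_5$ and $C_3\cup P_4\cup K_1$ one edge of the triangle suffices; and for the path-only cases a single edge deletion in the longest path yields the $(4,3,1)$ partition). By Observation~\ref{subgraph}, this reduces the necessity direction to verifying $rx_4(G)\neq 3$ for the two complements $\overline{G}\in\{C_6\cup 2K_1,\ P_4\cup P_3\cup K_1\}$, which I would do with the backtracking algorithm.

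For the sufficiency direction, Observation~\ref{subgraph} lets me focus on the two maximal targets $\overline{G}=K_2\cup 2K_3$ and $\overline{G}=P_6\cup K_2$. I would label the vertex set in each case, exhibit an explicit $3$-edge-coloring (to be displayed in a figure analogous to Figure~1), and verify via Lemma~\ref{4vertices} that every $4$-vertex induced subgraph is connected and uses all three colors.

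The main obstacle is bookkeeping rather than anything conceptual: the enumeration of the eleven candidates must be exhaustive, and the reduction of each to one of the two minimal forbidden complements must be justified case by case (with some care in the mixed cases $C_3\cup P_5$ and $C_3\cup P_4\cup K_1$, where one must be sure that deleting one triangle edge really produces $P_4\cup P_3\cup K_1$). Once this is in place, the remaining work is offloaded to the algorithm (to certify $rx_4\neq 3$ for the two minimal complements) and to a finite check of the two displayed colorings.
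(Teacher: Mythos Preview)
Your plan is essentially the paper's own proof: reduce via Propositions~\ref{Delta} and~\ref{c4c5} to the two minimal forbidden complements $P_4\cup P_3\cup K_1$ and $C_6\cup 2K_1$, and exhibit explicit $3$-colorings for the two maximal targets. Your enumeration of the eleven exceptional complements is accurate and more explicit than the paper's one-line assertion. Two small points: (i) in your parenthetical, deleting one triangle edge from $C_3\cup P_5$ yields $P_3\cup P_5$, not $P_4\cup P_3\cup K_1$; you also need to delete an end edge of the $P_5$, so this case requires two deletions rather than one. (ii) The paper disposes of the $P_4\cup P_3\cup K_1$ case by invoking Lemma~\ref{n=7} (restrict to the seven vertices outside the isolated $K_1$ and apply Lemma~\ref{4vertices}) rather than rerunning the algorithm; you may wish to do the same, since it is cheaper and ties the cases together.
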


\begin{proof}
Let $G$ be a graph with $rx_4(G)=3$. By Proposition \ref{Delta}, if
$\overline{G}$ is not a subgraph of $K_2\cup 2K_3$ or $P_6\cup K_2$,
then by Proposition \ref{c4c5}, it is easy to check that either
$\overline{G}$ contains $P_4\cup P_3\cup K_1$ or $\overline{G}$ is
isomorphic to $C_6\cup 2K_1$. By Observation \ref{subgraph}, we need
to verify that $rx_4(G)\neq3$ when $\overline{G}$ is isomorphic to
$P_4\cup P_3\cup K_1$ or $\overline{G}$ is isomorphic to $C_6\cup
2K_1$. If $\overline{G}$ is isomorphic to $P_4\cup P_3\cup K_1$,
then by Lemma \ref{n=7}, $rx_4(G)\neq3$. If $\overline{G}$ is
isomorphic to $C_6\cup 2K_1$, by the algorithm, $rx_4(G)\neq3$.

Conversely, by Observation \ref{subgraph} again, we need to provide
an edge-coloring of $G$ when $\overline{G}$ is isomorphic to
$K_2\cup 2K_3$ or $P_6\cup K_2$. The two edge-colorings are shown in
Figure 2. It is easy to show that the two edge-colorings make $G$
4-rainbow connected.
\end{proof}

\begin{figure}[h,t,b,p]
\begin{center}
\scalebox{0.7}[0.7]{\includegraphics{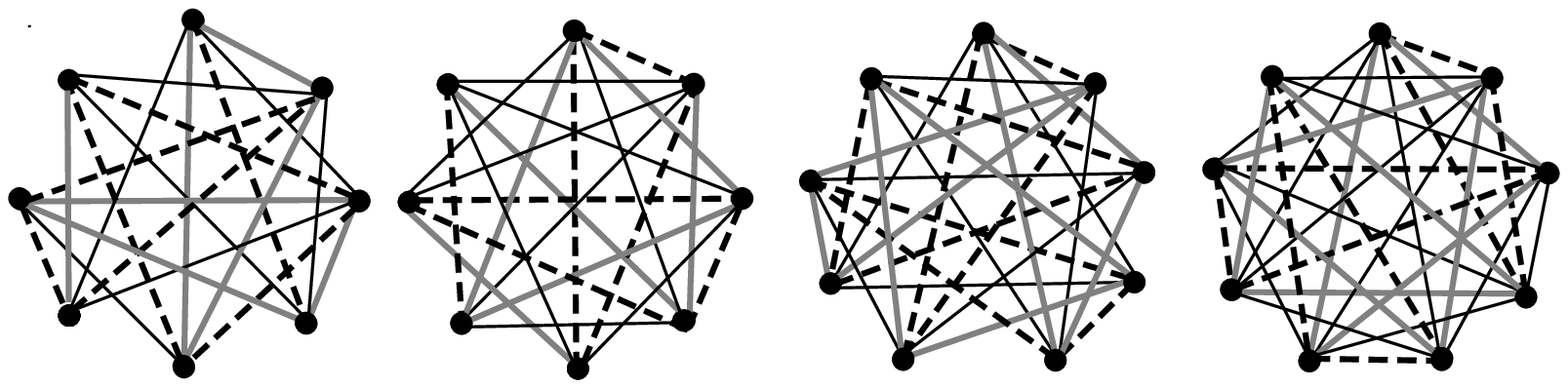}}\\[15pt]

Figure~2. Graphs for Lemma \ref{n=8}, \ref{n=9} (the same type of
lines stand for the same color)
\end{center}
\end{figure}

\begin{lem}\label{n=9}
Let $G$ be a graph of order 9. Then $rx_4(G)=3$ if and only if
$\overline{G}$ is a subgraph of $3K_3$ or $P_3\cup 3K_2$.
\end{lem}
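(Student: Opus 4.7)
For sufficiency, by Observation \ref{subgraph} it suffices to exhibit a 3-edge-coloring of $G$ in the two extremal cases $\overline{G} = 3K_3$ and $\overline{G} = P_3 \cup 3K_2$, and to verify via Lemma \ref{4vertices} that every induced 4-vertex subgraph of $G$ is connected and receives all three colors. The two colorings are displayed in Figure 2, mirroring the pattern of Lemmas \ref{n=5}--\ref{n=8}.

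For necessity, the key lever is that $rx_4=3$ is hereditary under vertex deletion: since Lemma \ref{4vertices} makes the condition purely local on 4-vertex induced subgraphs, the restriction of any witnessing 3-coloring of $G$ to $G - v$ still witnesses $rx_4(G - v) = 3$. By Lemma \ref{n=8}, it follows that for every $v \in V(G)$ the graph $\overline{G} - v$ must be a subgraph of $K_2 \cup 2K_3$ or of $P_6 \cup K_2$. Combined with Propositions \ref{Delta} and \ref{c4c5}, which already force $\overline{G}$ to be a disjoint union of paths and of cycles of length $3$ or at least $6$, this deletion test rules out very quickly all cycle components of length $\geq 6$ (no such cycle embeds in the forest $P_6 \cup K_2$ or in the small-component union $K_2 \cup 2K_3$; when $\overline{G} = C_9$ one notes instead that $\overline{G} - v = P_8$), and, by analogous vertex-deletion arguments, all path components $P_k$ with $k$ large. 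What remains is a short list of candidate complements whose components lie in $\{K_3, P_5, P_4, P_3, K_2, K_1\}$.

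The main obstacle is that the vertex-deletion test is only necessary, not sufficient: a handful of borderline complements such as $\overline{G} = P_4 \cup 2K_2 \cup K_1$, $P_5 \cup K_2 \cup 2K_1$, or $K_3 \cup 3K_2$ pass the $G-v$ test yet are not subgraphs of $3K_3$ or of $P_3 \cup 3K_2$, so they still must be excluded. For each of these finitely many borderline candidates I would invoke the backtracking algorithm of Section~2 to certify that no 3-edge-coloring of the corresponding $G$ is a valid 4-rainbow coloring, exactly as the algorithm was used for $P_4 \cup P_3$ in Lemma \ref{n=7} and for $C_6 \cup 2K_1$ in Lemma \ref{n=8}. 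Once the algorithm has eliminated each borderline complement, only $\overline{G} \subseteq 3K_3$ or $\overline{G} \subseteq P_3 \cup 3K_2$ remain, and the necessity direction is complete.
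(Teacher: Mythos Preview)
Your sufficiency direction matches the paper's. For necessity, however, the paper takes a much shorter route than the vertex-deletion argument you propose. Once Propositions \ref{Delta} and \ref{c4c5} have forced $\overline{G}$ to be a disjoint union of paths, triangles, and long cycles, the paper observes directly that if such an $\overline{G}$ is \emph{not} a subgraph of $3K_3$ or of $P_3\cup 3K_2$, then either $\overline{G}$ contains a $P_4$, or $\overline{G}\cong K_3\cup 3K_2$. By Observation \ref{subgraph} (adding edges to $G$ can only lower $rx_4$) this reduces the algorithmic check to exactly two graphs: $G$ with $\overline{G}=P_4\cup 5K_1$ and $G$ with $\overline{G}=K_3\cup 3K_2$.

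Your route through the hereditary property and Lemma \ref{n=8} is a legitimate idea, but there is a concrete gap in the execution. The assertion that the deletion test leaves only complements whose components lie in $\{K_3,P_5,P_4,P_3,K_2,K_1\}$ is false: for instance $\overline{G}=P_6\cup K_2\cup K_1$ passes the test at \emph{every} vertex (deleting the isolated vertex yields $P_6\cup K_2$ itself, and deleting any vertex of the $P_6$ or of the $K_2$ yields a graph that still embeds in $P_6\cup K_2$), yet it has a $P_6$ component and is not contained in $3K_3$ or $P_3\cup 3K_2$. Likewise $P_6\cup 3K_1$, $P_5\cup 2K_2$, $P_5\cup 4K_1$, $P_4\cup K_2\cup 3K_1$, and $P_4\cup 5K_1$ all survive the deletion test but are missing from your ``such as'' list of borderline complements. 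Your argument can certainly be repaired by enumerating all of these and feeding each to the algorithm, but the cleaner fix is to notice that every surviving borderline complement other than $K_3\cup 3K_2$ contains a $P_4$; hence a single algorithm call on $\overline{G}=P_4\cup 5K_1$ disposes of them all at once via Observation \ref{subgraph}. That is exactly the paper's shortcut, and it explains why the paper needs only two algorithm runs rather than a dozen.
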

\begin{proof}
Let $G$ be a graph with $rx_4(G)=3$. By Proposition \ref{Delta}, if
$\overline{G}$ is not a subgraph of $3K_3$ or $P_3\cup 3K_2$, then
by Proposition \ref{c4c5}, it is easy to check that either
$\overline{G}$ contains $P_4$ or $\overline{G}$ is isomorphic to
$K_3\cup 3K_2$. By Observation \ref{subgraph}, we need to verify
that $rx_4(G)\neq3$ when $\overline{G}$ is isomorphic to $P_4$ or
$K_3\cup 3K_2$, by the algorithm, in each case, $rx_4(G)\neq3$.

Conversely, by Observation \ref{subgraph} again, we need only to
provide an edge-coloring of $G$ when $\overline{G}$ is isomorphic to
$3K_3$ or $P_3\cup 3K_2$. The two edge-colorings are shown in Figure
2. It is easy to show that the two edge-colorings make $G$ 4-rainbow
connected.
\end{proof}

Combining the preceding five lemmas, we are ready to characterize
the graphs whose 4-rainbow index is 3.

\begin{thm}\label{thm1}
$rx_4(G)=3$ if and only if $G$ is one of the following graphs: (1)
$G$ is a connected graph of order 4; (2) $G$ is of order 5 and
$\overline{G}$ is a subgraph of $P_5$ or $K_2\cup K_3$; (3) $G$ is
of order 6 and $\overline{G}$ is a subgraph of $C_6$ or $2K_3$; (4)
$G$ is of order 7 and $\overline{G}$ is a subgraph of $C_6$ or
$2K_2\cup K_3$ or $P_5\cup K_2$ or $2K_3$; (5) $G$ is of order 8 and
$\overline{G}$ is a subgraph of $K_2\cup 2K_3$ or $P_6\cup K_2$; (6)
$G$ is of order 9 and $\overline{G}$ is a subgraph of $3K_3$ or
$P_3\cup 3K_2$.
\end{thm}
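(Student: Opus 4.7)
The plan is to assemble Theorem \ref{thm1} as a direct consolidation of the machinery already in place, rather than re-proving anything from scratch. First I would invoke Proposition \ref{dianshu} to reduce the problem to the range $4\le n\le 9$: any graph with $rx_4(G)=3$ must have order at most $9$, and Observation \ref{lowerupperbound} forces $n\ge 4$ (since $rx_4(G)\ge 3$ requires $n\ge 4$). This confirms that the six cases (1)--(6) listed in the statement exhaust all possibilities.

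Next I would dispatch the $n=4$ case (1) on its own. If $G$ has order $4$ and is connected, then any $4$-rainbow coloring must color a spanning $S$-tree on $S=V(G)$ with three distinct colors, so $rx_4(G)\ge 3$; conversely, coloring three edges of a spanning tree of $G$ with colors $1,2,3$ and the remaining edges arbitrarily produces a rainbow spanning tree, which is the unique $S$-tree requirement for $k=4$, $n=4$. Hence $rx_4(G)=3$ iff $G$ is connected. (Disconnected graphs are excluded because $rx_4$ is undefined, or equivalently infinite.)

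For the remaining cases (2)--(6), I would simply quote the five lemmas \ref{n=5}, \ref{n=6}, \ref{n=7}, \ref{n=8}, \ref{n=9} in turn. Each lemma was proved in both directions using Proposition \ref{Delta}, Proposition \ref{c4c5}, the algorithmic case analysis, and the explicit edge-colorings exhibited in Figures 1 and 2 together with Observation \ref{subgraph}. Thus, conditional on those lemmas, both the necessity and the sufficiency of each listed complement structure follow immediately.

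I expect no real obstacle here: the theorem is a packaging of results, not a new proof. The only mild subtlety is making sure that the $n=4$ case is handled carefully (since the earlier lemmas all assume $n\ge 5$), and that the statement is read correctly as a disjunction over the possible values of $n$, so that each side of each ``iff'' is matched with its corresponding lemma. A one-sentence combination argument then closes the proof.
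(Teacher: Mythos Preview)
Your proposal is correct and matches the paper's approach exactly: the paper presents Theorem~\ref{thm1} as an immediate consolidation of Proposition~\ref{dianshu}, the remark that $rx_4(G)=3$ for connected $G$ of order~$4$, and Lemmas~\ref{n=5}--\ref{n=9}, with no additional argument. Your slightly more explicit handling of the $n=4$ case and the lower bound $n\ge 4$ is a harmless elaboration of what the paper leaves as ``obvious.''
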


\section{Characterize the graphs with $rx_4(G)\bf{=n-1}$}

First of all, we need some notation and basic results.

\begin{definition}
Let $G$ be a connected graph with $n$ vertices and $m$ edges. Define
the $cyclomatic~number$ of $G$ as $c(G)=m-n+1$. A graph $G$ with
$c(G)=k$ is called a $k$-$cyclic$ graph. According to this
definition, if a graph $G$ meets $c(G)=0$, 1, 2 or 3, then $G$ is
called acyclic (or a tree), unicyclic, bicyclic, or tricyclic,
respectively.
\end{definition}

\begin{definition}
For a subgraph $H$ of $G$ and $v\in V(G)$, let $d(v,H)=min\{d_G(v,x): x\in V(H)\}$.
\end{definition}

Let $G$ be a connected graph. To $contract$ an edge $e=uv$ is to
delete $e$ and replace its ends by a single vertex incident to all
the edges which were incident to either $u$ or $v$. Let $G^{'}$ be
the graph obtained by contracting some edges of $G$. Given a rainbow
coloring of $G^{'}$, when it comes back to $G$, we can extend $G'$
back to $G$, we keep the colors of the corresponding edges of
$G^{'}$ in $G$ and assign a fresh color to a new edge. More
specifically, if one new vertex is added to the current graph $G'$,
give the new incident edge joining to $G'$ a fresh color. Then $G$
can be made to be 4-rainbow connected. Hence, the following lemma
holds.

\begin{lem}\label{contract}
Let $G$ be a connected graph, and $G^{'}$ be a connected graph by contracting some
edges of $G$. Then $rx_4(G)\leq rx_4(G^{'})+|V(G)|-|V(G^{'})|$.
\end{lem}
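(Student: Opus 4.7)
The plan is to reduce Lemma \ref{contract} to the single-contraction case and then iterate. Concretely, I would first prove the sub-claim: if $G^\star$ is obtained from $G$ by contracting one edge $e=uv$ into a new vertex $w$, then $rx_4(G)\leq rx_4(G^\star)+1$. The general statement then follows by induction on $|V(G)|-|V(G^\star)|$, since any edge-contraction from $G$ to $G^\star$ can be factored as a sequence of single contractions whose intermediate graphs remain connected.

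For the sub-claim I would take an optimal 4-rainbow coloring $c^\star$ of $G^\star$ using $rx_4(G^\star)$ colors, introduce a fresh color $\gamma$, and define a coloring $c$ of $G$ by $c(e)=\gamma$ and, for every other edge $f$ of $G$, $c(f)=c^\star(f^\star)$, where $f^\star$ denotes the image of $f$ in $G^\star$. Note that if $u$ and $v$ share a common neighbor $x$ in $G$, then both $ux$ and $vx$ map to the single edge $wx$ in $G^\star$ and therefore receive the same color under $c$; this is harmless because they are distinct edges of $G$ that will never be used together in the trees constructed below.

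To verify that $c$ is a 4-rainbow coloring, fix any 4-set $S\subseteq V(G)$ and split according to $|S\cap\{u,v\}|$. If $S\cap\{u,v\}=\emptyset$, take $S^\star=S$. If $|S\cap\{u,v\}|=1$, say $u\in S$, take $S^\star=(S\setminus\{u\})\cup\{w\}$; in both subcases $S^\star$ is a 4-set in $G^\star$, so $c^\star$ gives a rainbow $S^\star$-tree $T^\star$. If $\{u,v\}\subseteq S$, take $S^\star=(S\setminus\{u,v\})\cup\{w\}$, a 3-set in $G^\star$; since rainbow indices are monotone (so any 4-rainbow coloring is also 3-rainbow), a rainbow $S^\star$-tree $T^\star$ exists in every case.

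Finally, I would lift $T^\star$ to a rainbow $S$-tree of $G$. If $w\notin V(T^\star)$ then $T^\star$ sits inside $G$ unchanged and already connects $S$. Otherwise, replace each edge of $T^\star$ incident to $w$ by the corresponding edge at $u$ or at $v$ in $G$ (choosing either preimage when both exist); this produces a forest with at most two components, one containing $u$-side edges and one containing $v$-side edges. Adding the edge $e=uv$ (whenever both sides appear, or whenever $e$ is needed to include a missing vertex of $S\cap\{u,v\}$) yields a tree $T\subseteq G$ spanning $S$. Its edges other than $e$ inherit pairwise distinct colors from the rainbow tree $T^\star$, while $e$ carries the fresh color $\gamma$, so $T$ is rainbow. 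The main technical obstacle is precisely this last paragraph: one must check simultaneously that the lift is connected, contains $S$, and remains rainbow, and that the parallel-edge phenomenon (two edges of $G$ sharing one color) never creates a monochromatic pair inside the lifted tree, since in each lift we pick exactly one preimage per edge of $T^\star$.
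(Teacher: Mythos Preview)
Your proposal is correct and follows essentially the same approach as the paper: take an optimal 4-rainbow coloring of the contracted graph, give each uncontracted edge a fresh color, and iterate one contraction at a time. The paper's own argument is a single informal paragraph that asserts without proof that the resulting coloring is 4-rainbow connected; your write-up supplies exactly the verification (lifting a rainbow $S^\star$-tree from $G^\star$ back to $G$, handling the three cases for $|S\cap\{u,v\}|$, and noting that the parallel-edge issue is harmless) that the paper omits.
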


The $\Theta$-$graph$ is a graph consisting of three internally disjoint paths with
common end vertices and of lengths $a$, $b$, and $c$, respectively, such that
$a\leq b\leq c$. It follows that if a $\Theta$-graph has order $n$, then $a+b+c=n+1$.

Let $G$ be a connected graph of order $n$, to $subdivide$ an edge
$e$ is to delete $e$, add a new vertex $x$, and join $x$ to the ends
of $e$. We will first give some sufficient conditions to make sure
that the 4-rainbow index of $G$ never attains the upper bound $n-1$.

\begin{figure}[h,t,b,p]
\begin{center}
\scalebox{1}[1]{\includegraphics{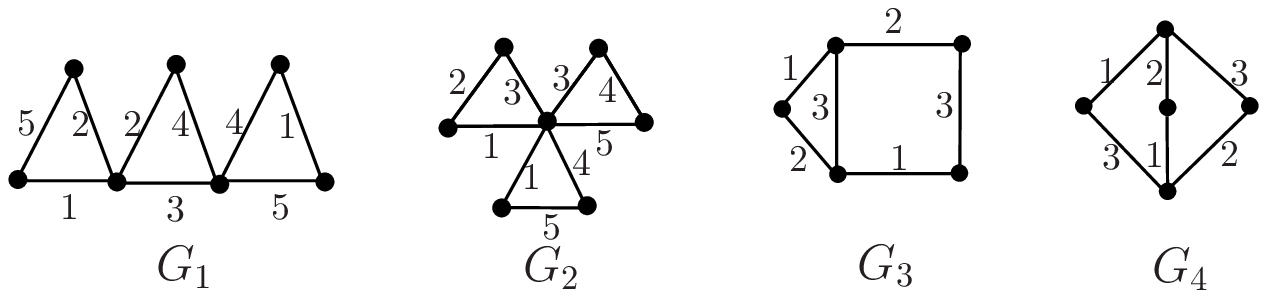}}\\[15pt]

Figure~3. Graphs for Lemma \ref{theta}
\end{center}
\end{figure}

\begin{lem}\label{theta}
Let $G$ be a connected graph of order $n$. If $G$ contains three edge-disjoint cycles,
or a $\Theta$-graph of order at least 5 as subgraphs, then $rx_4(G)\leq n-2$.
\end{lem}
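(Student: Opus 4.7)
The plan is to prove both conclusions by a common template: find a connected spanning subgraph $H^{\ast}$ of $G$ consisting of the given substructure $H$ augmented by an externally attached spanning forest, color $H$ economically with $|V(H)|-2$ colors, and assign a distinct fresh color to each of the $n-|V(H)|$ forest edges, so that Observation~\ref{subgraph} reduces the task to showing the resulting $(n-2)$-coloring of $H^{\ast}$ is $4$-rainbow. For every vertex $4$-subset $S\subseteq V(G)$, retracting $S$ to $V(H)$ along the forest yields a (multi)set $S'\subseteq V(H)$ of size at most $4$; a rainbow $S'$-tree in $H$ (which exists by the $4$-rainbow coloring of $H$, noting that every subtree of a rainbow tree is itself rainbow) combined with the uniquely colored forest paths attaching $S$ to $S'$ produces the required rainbow $S$-tree.

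For the $\Theta$-graph hypothesis, let $H\subseteq G$ be a $\Theta$-subgraph of order $k\ge 5$ with internally disjoint $u$--$v$ paths of lengths $a\le b\le c$, $a+b+c=k+1$. The core task is $rx_4(H)\le k-2$. For the base $k=5$ there are exactly two configurations, $(a,b,c)=(1,2,3)$ and $(a,b,c)=(2,2,2)$ (the latter being $K_{2,3}$); each admits an explicit $3$-coloring verified by hand against the $\binom{5}{4}=5$ vertex $4$-subsets, and such colorings can be read off Figure~3. For $k\ge 6$, any $\Theta_k$ is a one-edge subdivision of some $\Theta_{k-1}$, so I would induct: subdividing an edge $e=xy$ into $e_1=xw$ and $e_2=wy$ and assigning $e_1$ the color of $e$ together with $e_2$ a fresh color extends any $4$-rainbow $(k-3)$-coloring to a $(k-2)$-coloring. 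Rainbowness transfers because any $4$-set not containing $w$ either already has a rainbow tree avoiding $e$, or has one using $e$ which becomes rainbow upon replacing $e$ by $e_1\cup e_2$, while a $4$-set containing $w$ is handled by starting from a rainbow tree for the projected subset of $V(\Theta_{k-1})$ and appending the fresh-colored $e_2$.

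For the three edge-disjoint cycles hypothesis, I would first observe that if any two of the cycles $C_1,C_2,C_3$ share at least two vertices, edge-disjointness combined with simplicity forces both arcs joining the shared vertices within each cycle to have length $\ge 2$, so their union contains a $\Theta$-subgraph of order $\ge 5$ and we invoke the previous case. Otherwise, the three cycles meet pairwise in at most one vertex; let $H_0$ be a minimum connected subgraph of $G$ containing $C_1\cup C_2\cup C_3$, which is the union of the three cycles together with at most two shortest connecting paths. Its cyclomatic number being at least $3$, a suitable spanning tree $T$ of $H_0$ leaves one chord $e_i$ inside each $C_i$. The plan is to assign two tree edges $f,g$ (say, both lying on $C_1$) a common color, distinct colors to all other tree edges, and each chord $e_i$ a color chosen so that $C_i$ supplies a rainbow alternative for every $4$-subset of $V(H_0)$ whose Steiner tree in $T$ traverses both $f$ and $g$. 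This gives $rx_4(H_0)\le |V(H_0)|-2$, whence the spanning-forest extension template produces the desired $(n-2)$-coloring of a spanning subgraph of $G$.

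The principal obstacle is the three-cycle case: choosing $f,g$ and the chord colors so that every problematic $4$-set admits a rainbow reroute requires a case analysis on the intersection pattern of $C_1,C_2,C_3$ (common vertex, pairwise vertex intersections without a triple point, and purely path-connected). The $\Theta$-case is comparatively clean once the subdivision step is in place, and the only unavoidable finite check is the enumeration of $4$-sets for the two $k=5$ base colorings.
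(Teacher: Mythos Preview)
Your spanning--forest extension template is essentially the paper's Lemma~\ref{contract} (contraction), and your treatment of the $\Theta$-graph hypothesis---two base cases at order $5$ followed by an inductive subdivision step---matches the paper's argument exactly (the paper phrases it as contracting back to the base graphs $G_3,G_4$ of Figure~3, which is the same thing read backwards).

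Where you diverge is the three-edge-disjoint-cycles hypothesis. The paper does \emph{not} attempt a direct spanning-tree coloring of $H_0$. Instead it reuses the same reduction machinery: it exhibits two small base graphs $G_1,G_2$ (each a configuration of three triangles) with explicit colorings witnessing $rx_4(G_i)\le |V(G_i)|-2$, and then observes that any connected graph containing three edge-disjoint cycles has a spanning subgraph obtainable from $G_1$ or $G_2$ by subdividing triangle edges into the given cycles and appending further edges. Lemma~\ref{contract} plus Observation~\ref{subgraph} then finish. This buys complete uniformity between the two hypotheses and eliminates precisely the case analysis you identify as the principal obstacle.

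Your alternative route is not wrong in spirit, but as written it has a genuine gap: the coloring scheme in step~3 (repeat one tree color on $f,g\subset C_1$ and choose chord colors to reroute) is only asserted, and you concede that verifying it across the intersection patterns of $C_1,C_2,C_3$ is the hard part. Nothing in your outline guarantees that a single repeated color plus three chords suffices for \emph{every} $4$-set whose Steiner tree uses both $f$ and $g$; in particular, when the connecting paths are long and all four vertices of $S$ lie far from $C_1$, the reroute through $C_1$ alone may not reach them. Your preliminary reduction (two cycles sharing $\ge 2$ vertices forces a $\Theta$-subgraph of order $\ge 5$) is correct but also needs a line of justification when the cycles share more than two vertices, since the four $u$--$v$ arcs need not be internally disjoint. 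The paper's base-graph approach sidesteps all of this.
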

\begin{proof}
Consider two graphs $G_1$, $G_2$ in Figure 3, and by checking the
given edge-coloring in the figure, we have $rx_4(G_i)\leq
|V(G_i)|-2$, $i=1,2$. Then if $G$ contains three edge-disjoint
cycles $C_1,C_2,C_3$, we can extend the three triangles of $G_1$ or
$G_2$ to $C_1,C_2$ and $C_3$ respectively by a sequence of
operations of subdivision. Then add to the cycles an additional set
of edges, to get a spanning subgraph $G'$ of $G$. By Observation
\ref{subgraph} and Lemma \ref{contract}, we have $rx_4(G)\leq
rx_4(G')\leq rx_4(G_i)+|V(G^{'})|-|V(G_i)|\leq n-2$.

Let $\mathcal{G}$ be the set of $\Theta$-graphs whose order is
exactly 5. Then $\mathcal{G}=\{G_3,G_4\}$ (see Figure 3). By
checking the given edge-coloring, we have $rx_4(G_i)\leq
|V(G_i)|-2$, $i=3,4$. Similarly, $rx_4(G)\leq n-2$ follows.
\end{proof}

A graph $G$ is a $cactus$ if every edge is part of at most one cycle in $G$.

\begin{lem}\label{lem3}
Let $G$ be a cactus of order $n$ and  $c(G)=2$. Then $rx_4(G)=n-1$.
\end{lem}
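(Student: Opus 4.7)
The upper bound $rx_4(G)\le n-1$ is immediate from Observation~\ref{lowerupperbound}. The substance of the lemma is the matching lower bound, which I would establish by showing that every $4$-rainbow edge-coloring of $G$ uses at least $n-1$ distinct colors. Since $G$ is a cactus with $c(G)=2$, it contains exactly two cycles $C_1$ and $C_2$, meeting in at most one vertex, and $|E(G)|=n+1$. Write $g_i=|V(C_i)|$.

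My plan is a proof by contradiction: assume a $4$-rainbow coloring uses at most $n-2$ colors, so the $n+1$ edges carry at least three color coincidences. Observation~\ref{cutedge} rules out coincidences between two bridges, so every coincidence involves at least one cycle edge. The key structural tool I will invoke is a cactus restriction principle: for any $4$-subset $S\subseteq V(C_i)$ with $g_i\ge 4$, every rainbow $S$-tree lies entirely inside $C_i$ and is obtained from $C_i$ by deleting a single arc between two consecutive vertices of $S$. This is because in a cactus any path between two vertices of $C_i$ that leaves $C_i$ through a cut-vertex must return through the same cut-vertex and contributes nothing to a minimal tree. Consequently the $g_i-1$ edges of the rainbow $S$-tree carry distinct colors, so $E(C_i)$ itself uses at least $g_i-1$ colors and contains at most one pair of equally colored edges.

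The case analysis then splits on $(g_1,g_2)$. When $g_1=g_2=3$ and $G$ has no pendant subtree, the $4$-subset consisting of the two non-cut vertices of $C_1$ and the two non-cut vertices of $C_2$ has Steiner distance exactly $n-1$, so directly $rx_4(G)\ge n-1$. When at least one $g_i\ge 4$, I combine the cactus restriction on the larger cycle with the bridge constraint of Observation~\ref{cutedge} and with mixed $4$-subsets (for instance, one vertex from $C_1\setminus\{u\}$ together with all non-cut vertices of $C_2$), which force both a rainbow arc-deletion on $C_2$ and a $C_1$-path whose colors must avoid those on that arc, thereby blowing the color budget. Pendant subtrees and extra bridges are absorbed by induction on $n$: given a leaf $v$ with pendant edge $e$, let $G'=G-v$, a cactus with $c(G')=2$ and $|V(G')|=n-1$; the coloring restricted to $G'$ is still $4$-rainbow (prune $v$ from any $S$-tree in which it is a non-$S$ leaf), so by induction $\ge n-2$ colors appear on $G-v$. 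It then suffices to rule out $c(e)=c(f)$ for any remaining edge $f$ by producing a $4$-subset $S\ni v$ in which every rainbow $S$-tree is forced to use both $e$ and $f$.

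The main obstacle will be the case $g_1=3$, $g_2\ge 4$ with a short bridge-path, in which $sdiam_4(G)$ can fall strictly short of $n-1$ and so no single $4$-subset settles the bound by Steiner distance alone; here one must argue that the at most two rainbow choices of arc-deletion on $C_2$, combined with the cactus restriction principle and the bridge distinctness, leave no color-room for the $C_1$-side of the tree, forcing the $(n-2)$-color budget to be exceeded. A similar delicate color-counting argument is needed in the inductive step to locate the $4$-subset that excludes $c(e)=c(f)$, and the choice depends on whether $f$ lies on a triangle or on a longer cycle.
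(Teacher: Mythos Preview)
Your plan is a genuinely different route from the paper's: the paper never inducts on $n$ or splits on the cycle lengths.  Instead it fixes a hypothetical $4$-rainbow coloring with $n-2$ colors, lists the three possible profiles of color-class sizes ($(4,1,\ldots,1)$, $(3,2,1,\ldots,1)$, $(2,2,2,1,\ldots,1)$), and in each profile uses two local claims (no three cycle edges share a color; no four cycle edges carry only two colors) together with Observation~\ref{cutedge} to locate a $3$- or $4$-subset with no rainbow tree.  Your cactus restriction principle is correct (any $S$-tree meets $C_i$ in a subpath containing $S$, hence the induced coloring on $C_i$ is itself $4$-rainbow and so uses at least $g_i-1$ colors by Theorem~\ref{unicyclic}), and your Steiner-distance argument for the leafless $g_1=g_2=3$ base case is clean.

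The gap is in exactly the two places you yourself flag as ``delicate.''  In the inductive step you propose to rule out $c(e)=c(f)$ by exhibiting a $4$-subset $S\ni v$ whose every $S$-tree uses both $e$ and $f$.  When $f$ is a bridge this works, but when $f\in E(C_j)$ it cannot: for \emph{any} choice of $S$, the $S$-tree meets $C_j$ in a subpath obtained by deleting one arc, and one of the admissible arc-deletions always removes $f$.  So no single $4$-subset forces $f$, and you must instead argue globally about the colour budget on $C_j$ and the bridges --- which is precisely the kind of colour-class bookkeeping the paper does head-on.  The same issue bites in your leafless base case with $g_i\ge 4$: the cactus restriction bounds repeats \emph{within} a cycle but says nothing about repeats between an edge of $C_1$, an edge of $C_2$, and a bridge, and your sketch (``blowing the color budget'') does not explain how to exclude these mixed coincidences.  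Completing either step seems to require a case analysis at least as detailed as the paper's three profiles, so the induction-plus-restriction framework does not visibly shorten the argument; it relocates the difficulty rather than dissolving it.
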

\begin{proof}
Let the two cycles of $G$ be $C^1$ and $C^2$, where
$C^1=v_1v_2\cdots v_{\ell}v_1$, $C^2=v'_1v'_2\cdots v'_{\ell'}v'_1$,
the unique path connecting the two cycles be $v_iPv'_j$, where the
two end-vertices $v_i$ and $v'_j$ may coincide. Suppose we have a
color set $C$ and $|C|=n-2$. Set $C=\{1,2,\cdots,n-2\}$ and $E_i$ is
the set of edges colored with $i$, $c_i=|E_i|$, $1\leq i\leq n-2$.
Without loss of generality, we always set $c_1\geq c_2\geq\cdots\geq
c_{n-2}$. Notice that $\sum^{n-2}_{i=1}c_i=n+1$. We divide into the
following cases.

{\bf Case 1.}~~ $c_1=4$, $c_2=c_3=\cdots =c_{n-2}=1$. We have the following claim.

{\bf Claim 1.}~~ No three edges of $C^1$ or $C^2$ have the same color.

\emph{Proof.}~Suppose $c(v_1v_2)=c(v_pv_{p+1})=c(v_qv_{q+1})$, where $v_1v_2$,
$v_pv_{p+1}$, $v_qv_{q+1}$ are three distinct edges. Let $S=\{v_1,v_p,v_q\}$. It
is easy to check that any tree connecting $S$ contains at least two edges of $v_1v_2$,
$v_pv_{p+1}$ and $v_qv_{q+1}$, this contradiction proves the claim.

By Observation \ref{cutedge} and Claim 1, at least 3 edges of $E_1$
exist on cycles and each cycle has at most two of them. Suppose
$v_1v_2$ and $v_pv_{p+1}$ of $C^1$ have color 1, we distinguish two
subcases: (1) there is a cut edge $uu'$ in $E_1$. Suppose
$d(u,C^1)\geq d(u',C^1)$ and $d(u,v_i)=d(u,C^1)$, where $2\leq i\leq
p$. Any tree connecting $v_1$ and $u$ contains at least two edges
colored with 1. (2) no cut edge has color 1. Then at least two
edges, say $v'_1v'_2$ and $v'_qv'_{q+1}$ of $C^2$ have color 1, and
the end-vertices of the path connecting $C^1$ and $C^2$ are $v_i$
and $v'_j$, where $2\leq i\leq p$, $2\leq j\leq q$. Again, any tree
connecting $v_1$ and $v'_1$ contains at least two edges in $E_1$.

{\bf Case 2.}~~ $c_1=3,c_2=2,c_3=\cdots =c_{n-2}=1$. We also have the following claim.

{\bf Claim 2.}~~ No four edges of a cycle can have only two colors.

\emph{Proof.}~Suppose otherwise four edges, $v_1v_2$, $v_pv_{p+1}$,
$v_qv_{q+1}$, $v_rv_{r+1}$ of $C^1$ have color $a$ or $b$, where
$a,b\in C$. Set $S=\{v_1,v_p,v_q,v_r\}$. It is easy to check that
any tree connecting S contains at least three of the four edges
above. By the Pigeon Hole Principle, one of the two colors occurs at
least twice, a contradiction.

By Claim 2, at most three edges of $C^i(i=1,2)$ can have colors 1
and 2. Notice that $|E_1\cup E_2|=5$. Since no two cut edges can
have the same color, there are the following possibilities: (1)
three edges of $E_1\cup E_2$ are in a cycle, say $C^1$. Then there
exist cut edges in $E_1\cup E_2$, or the other two edges of $E_1\cup
E_2$ are both in $C^2$. Similar to Case 1, we can choose three
vertices such that no rainbow tree connects them. (2) two edges of
$E_1\cup E_2$ are in each cycle. Then a cut edge $uu'$ exists in
$E_1\cup E_2$. There are two situations according to the positions
of $uu'$ and the other four edges of $E_1\cup E_2$ in cycles. We can
always find three vertices such that any tree connecting them
contains at least three edges of $E_1\cup E_2$. (3) two edges of
$E_1\cup E_2$ are in one cycle, and other two of them are cut edges.
The argument is similar, and it also produces a contradiction.

{\bf Case 3.}~~ $c_1=c_2=c_3=2,c_4= \cdots =c_{n-2}=1$. In a number
of subcases similar to those in Cases 1 and 2, a set $S$ of vertices
can be found such that a tree connecting them contains at least four
edges from $E_1\cup E_2\cup E_3$. So by the Pigeon Hole Principle
again, one of the three colors occurs at least twice.

By the analysis above, all the possibilities of an $(n-2)$-coloring
lead to a contradiction, thus we have $rx_4(G)\geq n-1$. On the
other hand, by Observation \ref{lowerupperbound}, it follows that
$rx_4(G)= n-1$.
\end{proof}

To characterize all the graphs with 4-rainbow index $n-1$, we need
to introduce more graphs. Let $\mathcal{G}_1$ be the set of graphs
by identifying each vertex of $K_4$ with an end-vertex of an
arbitrary path, and $\mathcal{G}_2$ be the set of graphs by
identifying each vertex of $K_4-e$ with the root of an arbitrary
tree.

\begin{lem}\label{lem4}
Let $G$ be a connected graph of order $n$. If $G\in
\mathcal{G}_1\cup \mathcal{G}_2$, then $rx_4(G)=n-1$.
\end{lem}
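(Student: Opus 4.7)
Both the upper and lower bounds need attention, but the upper bound $rx_4(G) \le n-1$ is immediate from Observation \ref{lowerupperbound} in both cases. The real work is the matching lower bound $rx_4(G) \ge n-1$, which I would prove separately for the two classes.

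For $G \in \mathcal{G}_1$: writing $P_i$ for the pendant path identified with $v_i \in V(K_4)$ and $u_i$ for its far endpoint ($u_i = v_i$ when $P_i$ is trivial), I would take $S = \{u_1, u_2, u_3, u_4\}$. Since each $u_i$ for which $P_i$ is nontrivial is a leaf of $G$ reachable only through $P_i$, any $S$-tree $T$ is forced to contain every edge of every $P_i$, contributing $n-4$ bridges in total. To connect $v_1, v_2, v_3, v_4$ inside $K_4$, $T$ needs at least $3$ further edges, so $|E(T)| \ge n-1$ and a rainbow $S$-tree demands at least $n-1$ distinct colors.

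For $G \in \mathcal{G}_2$: let $T_i$ be the tree attached at $v_i \in V(K_4-e)$. I would assume for contradiction a 4-rainbow coloring with at most $n-2$ colors. By Observation \ref{cutedge} the $n-4$ tree-edges (all bridges of $G$) occupy $n-4$ pairwise distinct colors, leaving at most two colors free for the five non-bridge edges of $K_4-e$. When every $T_i$ is a path, the $\mathcal{G}_1$ argument carries over verbatim with $S$ equal to the four deepest leaves. Otherwise some $T_i$ has a vertex of degree $\ge 3$, and I would pick $S$ to include two leaves of that branching tree together with appropriate leaves or $v_j$'s of the others, so that the forced portion of $T_i$ inside any $S$-tree already spans strictly more edges than a single root-to-leaf path.

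The main obstacle is precisely the case when some $T_i$ is not a path: one $S$ alone no longer automatically produces a forced tree of size $\ge n-1$, so the contradiction must be extracted by a color case chase. I would vary $S$ across several natural 4-sets (pairs of leaves of the branching $T_i$ combined with leaves or roots of the remaining trees) to pin down rigid constraints on how the two "extra" colors can be placed among the five edges of $K_4-e$; exploit that $K_4-e$ is the union of two triangles sharing the edge $v_3v_4$, so every spanning tree of $K_4-e$ uses at least one of $\{v_1v_3, v_1v_4, v_3v_4\}$ and at least one of $\{v_2v_3, v_2v_4, v_3v_4\}$; and finally exhibit an $S$ for which none of the surviving color patterns admits a rainbow $S$-tree. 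This color-placement analysis, in the spirit of the case analysis in Lemma \ref{lem3}, is where the bulk of the work lies.
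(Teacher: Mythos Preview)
Your treatment of $\mathcal{G}_1$ is the paper's argument: both compute the Steiner distance of the four far endpoints and invoke Observation~\ref{lowerupperbound}. Your ``all $T_i$ are paths'' subcase of $\mathcal{G}_2$ is also fine for the same reason, and is in fact a pleasant shortcut the paper does not isolate.

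The gap is in the general $\mathcal{G}_2$ case. From ``the $n-4$ bridges occupy $n-4$ pairwise distinct colors'' you infer ``at most two colors free for the five non-bridge edges of $K_4-e$'', and your entire downstream plan (``how the two `extra' colors can be placed among the five edges of $K_4-e$'') is built on the premise that $H=K_4-e$ sees only two colors. But nothing forbids an edge of $H$ from reusing a bridge color; with $n-2$ colors available, all five edges of $H$ could in principle carry five distinct bridge colors. Your color-placement chase never gets off the ground without first controlling this reuse.

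The paper handles precisely this point with its Claim~3: at most two edges of $H$ can share a color with a cut edge, because if three did, one can delete those three edges of $H$ (any three edges of $K_4-e$ form a disconnecting set), pick a vertex $v$ of $H$ separated from the far endpoint $v'_1$ of one of the offending bridges, and observe that $S=\{v,v'_1,v'_2,v'_3\}$ has no rainbow tree. Only after Claim~3 does one know that at least three edges of $H$ must use the at most two fresh colors, and then a short case analysis (one versus two $H$-edges reusing bridge colors, and where they sit relative to the degree-two vertices of $K_4-e$) finishes the job. Your path/non-path dichotomy is not the paper's organizing principle; to make the non-path branch work you would need to prove something equivalent to Claim~3 before any colour chase on $H$ can begin.
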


\begin{proof}
Suppose $G\in \mathcal{G}_1$, and $v_1$, $v_2$, $v_3$ and $v_4$ are
the four pendant vertices of $G$. We have
$d_{G}(v_1,v_2,v_3,v_4)=n-1$. Combining with Observation
\ref{lowerupperbound}, we have $rx_4(G)=n-1$. Let $G\in
\mathcal{G}_2$. Denote by $H$ the induced subgraph $K_4-e$ of $G$,
where $E(H)=\{v_1v_2,v_2v_3,v_3v_4,v_4v_1,v_2v_4\}$ and denote by
$T_i$ the tree rooted at $v_i$, $i=1,2,3,4$. We have the following
claim.

{\bf Claim 3.}~~ No three edges of $H$ share colors with the cut edges.

\emph{Proof.}~Let $v'_iv''_i,1\leq i\leq 3$, be the cut edges whose
colors exist in $H$. We may assume that $d(v'_i,H)\geq d(v''_i,H)$.
Notice that the deletion of any three edges of $H$ disconnects $G$,
and we will get some components. Let $v$ be an arbitrary vertex of
$H$ in the component different from the one containing $v'_1$. Set
$S=\{v,v'_1,v'_2,v'_3\}$. There is no rainbow tree connecting $S$,
which verifies Claim 3.

Now we are aiming to prove that $H$ needs at least three fresh
colors different from the $n-4$ colors of cut edges to make sure
that $G$ is 4-rainbow connected. Then we get the conclusion
$rx_4(G)=n-1$. Since $rx_4(H)=3$ and by Claim 3, one or two edges of
$H$ have the color of cut edges. Assume first that the colors of cut
edges $v'_1v''_1$, $v'_2v''_2$ appear in $H$. Suppose $d(v'_i,H)\geq
d(v''_i,H)$, $i=1,2$. Since the deletion of two edges incident to a
vertex of degree two disconnects $H$, the position of the two edges
of $H$ having the colors of cut edges may have the following
possibilities: $v_1v_4$, $v_2v_4$ or $v_1v_4$, $v_3v_4$ or $v_1v_2$,
$v_3v_4$. Notice that the remaining three edges can only have fresh
colors. If only two colors are used, then at least two edges of $H$
have the same color. It is easy to find two vertices $v_i$, $v_j$ of
$H$, such that no rainbow tree connects $S$, where
$S=\{v'_1,v'_2,v_i,v_j\}$. Assume then only one edge of $H$ has the
color of cut edge, say $v'_1v''_1$ of $T_i$. Suppose $d(v'_1,H)\geq
d(v''_1,H)$. Then any tree connecting $v'_1$ and the three vertices
of $H$ except $v_i$ makes use of at least three edges of $H$, namely
at least three new distinct colors are needed in $H$. Thus the
result follows.
\end{proof}

\begin{figure}[h,t,b,p]
\begin{center}
\scalebox{1}[1]{\includegraphics{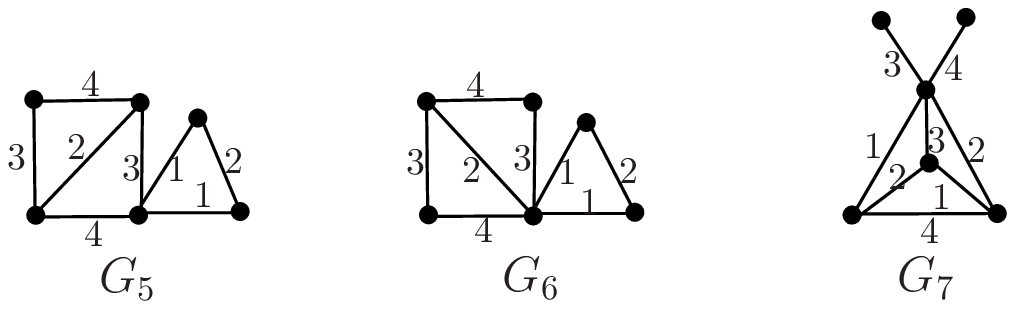}}\\[15pt]

Figure~4. Graphs for Theorem \ref{thm2}
\end{center}
\end{figure}

Now we are prepared to characterize the graphs of order $n$ whose 4-rainbow index is $n-1$.

\begin{thm}\label{thm2}
Let $G$ be a graph of order $n$. Then $rx_4(G)=n-1$ if and only if $G$ is a
tree, or a unicyclic graph, or a cactus with $c(G)=2$, or $G\in \mathcal{G}_1\cup \mathcal{G}_2$.
\end{thm}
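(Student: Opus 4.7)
The plan is to prove both directions of the biconditional, with the main effort going into the ``only if'' direction. The ``if'' direction is immediate: trees are handled by Proposition \ref{tree}, unicyclic graphs by Theorem \ref{unicyclic} (the case $k=4 \geq 4$ always forces $rx_k=n-1$), cacti with $c(G)=2$ by Lemma \ref{lem3}, and members of $\mathcal{G}_1 \cup \mathcal{G}_2$ by Lemma \ref{lem4}. So I would dispatch this direction quickly and then turn to the converse.

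For the converse, I argue by contrapositive: assume $G$ is none of the listed types and deduce $rx_4(G) \leq n-2$. Since $G$ is neither a tree nor unicyclic, $c(G) \geq 2$. The first reduction is Lemma \ref{theta}: if $G$ contains three edge-disjoint cycles or a $\Theta$-subgraph of order at least $5$, we are done. So assume $G$ has at most two edge-disjoint cycles and every $\Theta$-subgraph of $G$ has order exactly $4$, i.e., is a copy of $K_4-e$. If $G$ were a cactus, all its cycles would be edge-disjoint, forcing $c(G)=2$ and placing $G$ in the listed types, a contradiction. Hence $G$ is not a cactus and contains a $\Theta$-subgraph $H \cong K_4-e$ on some four-vertex set $V(H)=\{u_1,u_2,u_3,u_4\}$, labeled so that $u_1,u_3$ are the degree-$3$ vertices of $H$ and $u_2u_4$ is the missing edge.

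The heart of the argument is a structural analysis of $G$ relative to $H$. First, every connected component $K$ of $G-V(H)$ attaches to $V(H)$ via edges at a single vertex of $V(H)$: otherwise an external path $P$ of length at least $2$ would join two distinct $u_i,u_j \in V(H)$, and combining $P$ with suitable paths inside $H$ yields a $\Theta$-subgraph of order at least $5$. Concretely, when $\{u_i,u_j\}=\{u_1,u_3\}$ the two length-$2$ paths of $H$ together with $P$ give parameters $(2,2,\geq 2)$ (and order at least $5$ whenever $|P|\geq 2$); when $\{u_i,u_j\}=\{u_2,u_4\}$ the analogous argument gives $(2,2,\geq 2)$; and when $u_iu_j \in E(H)$ one instead uses $(1,2,\geq 3)$ combining the direct edge, an $H$-path of length $2$, and a length-$\geq 3$ path built from $P$ and leftover $H$-edges. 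A parallel cycle-count analysis restricts each attached component: any extra cycles in $K$ combined with the cycle in $H$ consume the edge-disjoint budget quickly, so in the typical situation $K$ is just a tree and then $G$ lands in $\mathcal{G}_2$ (when $G[V(H)]=K_4-e$) or in $\mathcal{G}_1$ (when $G[V(H)]=K_4$ and every hung tree is in fact a path), contradicting the hypothesis.

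The main obstacle is the residual situation in which (a) $G[V(H)]=K_4$ but some hung tree contains a vertex of degree at least $3$, or (b) an attached component is unicyclic (or contains a second $K_4-e$) and is therefore not allowed by $\mathcal{G}_2$ even though it does not immediately trigger Lemma \ref{theta}. For each such sub-case I would exhibit an explicit $(n-2)$-edge-coloring witnessing $rx_4(G) \leq n-2$. The template is: fix a rainbow $3$-coloring on the core $K_4-e$ or $K_4$ (which exists since $rx_4=3$ there by Theorem \ref{thm1}); assign a fresh color along each non-branching attachment segment; and at each branching vertex of a hung tree (or along the unique cycle of an attached unicyclic) reuse a color already used inside the core so that two sibling attachment edges share a color without impairing rainbow connectivity for any $4$-subset $S$. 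The verification then reduces to an exhaustive check over the position of $S$ relative to the core and the attachments---how many vertices of $S$ lie in $V(H)$ versus in each branch, and which Steiner vertices are forced---and it is this case-check, rather than the structural reductions above, where the bulk of the technical work lies.
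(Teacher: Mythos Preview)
Your overall strategy matches the paper's: dispatch sufficiency via Proposition~\ref{tree}, Theorem~\ref{unicyclic}, Lemma~\ref{lem3}, Lemma~\ref{lem4}; then for necessity use Lemma~\ref{theta} to reduce to the situation where $G$ contains $K_4$ or $K_4-e$ as an induced subgraph, and finally argue that if $G\notin\mathcal{G}_1\cup\mathcal{G}_2$ one can exhibit an $(n-2)$-coloring. The structural analysis you give (single attachment vertex for each component of $G-V(H)$, attached pieces essentially trees, etc.) is more explicit than what the paper writes, and is essentially sound, though your case split for the $\Theta$-argument when $u_iu_j\in E(H)$ needs a little more care when the external path has length exactly~$2$.

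The genuine difference is in the endgame. You propose to manufacture, for each residual shape, an explicit $(n-2)$-edge-coloring of $G$ itself and then do a large case-check over the position of $S$. The paper instead isolates three fixed small graphs $G_5,G_6,G_7$ (Figure~4) that serve as the minimal obstructions to membership in $\mathcal{G}_1\cup\mathcal{G}_2$, gives a single explicit coloring of each witnessing $rx_4(G_i)\le |V(G_i)|-2$, and then invokes Observation~\ref{subgraph} together with Lemma~\ref{contract} to pass from $G_i$ to a spanning subgraph of $G$. This buys a lot: the verification collapses to three finite checks rather than an open-ended family of colorings, and the contraction lemma absorbs all the ``hang a tree/path at each vertex'' bookkeeping that you would otherwise have to redo in every coloring. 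Your route would work, but the case-analysis you flag as ``the bulk of the technical work'' is precisely what the paper sidesteps; in particular your residual case~(b) (a unicyclic piece hung on $K_4-e$ or $K_4$) and the ``second $K_4-e$'' possibility are exactly what the seed graphs $G_5,G_6,G_7$ are designed to cover in one stroke.
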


\begin{proof}
We only need to prove the necessity. Let $G$ be a graph with
$rx_4(G)=n-1$. By Lemma \ref{theta}, we know that if $G$ is not a
tree or a unicyclic graph or a cactus with $c(G)=2$, then $G$
contains a $K_4$ or $K_4-e$ as an induced subgraph. Now suppose that
$G$ contains a $K_4$ or $K_4-e$ but $G\notin \mathcal{G}_1\cup
\mathcal{G}_2$. Consider the three graphs $G_5$, $G_6$, $G_7$. By
checking the given coloring in Figure 4, we have $rx_4(G_i)\leq
n-2$, $i=5,6,7$. Thus we can extend $G_5$, $G_6$ or $G_7$ to get a
spanning subgraph $G'$ of $G$, then $rx_4(G)\leq rx_4(G')\leq n-2$,
a contradiction.
\end{proof}

\end{document}